\let\mathbb\mathds
\title{Geometric quantization and semi-classical limits of pairings of TQFT vectors}
\author{Renaud Detcherry \footnote{Universit\'e Paris 6
}}
\date{}
\renewcommand\@biblabel[1]{}
\newtheorem{theo}{Theorem}[section]
\newtheorem{definition}{Definition}[section]
\newtheorem{lemma}{Lemma}[section]
\newtheorem{proposition}{Proposition}[section]
\DeclareMathOperator{\su}{\textrm{SU}_2}
\DeclareMathOperator{\tr}{\textrm{Tr}}
\DeclareMathOperator{\ad}{\textrm{Ad}}
\begin{document}
\maketitle
\begin{abstract}
This paper exposes an explicit mapping between the TQFT vector spaces $V_{r}(\Sigma)$ defined combinatorially by Blanchet-Habegger-Masbaum and Vogel and spaces of holomorphic sections of complex line bundles on some Kähler manifold, following the approach of geometric quantization. We explain how curve operators in TQFT can then be seen as Toeplitz operators with symbols corresponding to some trace functions. As an application, we show that eigenvectors of these operators are concentrated near the level sets of these trace functions, and obtain asymptotic estimates of pairings of such eigenvectors. This yields an asymptotic for the matrix coefficients of the image of mapping classes by quantum representations.
\end{abstract}
\section{Introduction}
The study of topological quantum field theories (or TQFT) was developed after Witten used the Jones polynomial to heuristically define a collection of invariants of 3-manifolds, cobordisms, surfaces and mapping class on surfaces, satisfying some axioms, including some compatibility with gluings or disjoint union, and gave the expected asymptotic expansion of these 3-manifold invariants, in what is known as the Witten conjecture.
Later, these TQFT were constructed more rigorously by Reshetikhin and Turaev in \cite{RT}, and later by Blanchet, Masbaum, Habegger and Vogel in \cite{BHMV}, in the case where the gauge group is $G=\textrm{SU}_2$, using skein calculus. This second approach, more combinatorial, is the one we use in the following paper. 
\\ To each compact oriented surface $\Sigma$ is associated by the TQFT a sequence of Hermitian vector spaces $V_r(\Sigma)$, parametrized by an integer $r$ called level, to each pants decomposition a basis $(\varphi_{\alpha})$ of $V_r(\Sigma)$, and to each simple closed curve on $\Sigma$ a curve operator $T_r^{\gamma}\in \textrm{End}(V_r(\Sigma))$. The goal of this paper is to compute the asymptotic behavior of pairings $\langle \varphi_{\alpha},\psi_{\beta}\rangle$ of basis vectors corresponding to two pants decomposition when the level goes to infinity.
\\ A helpful tool to study asymptotics of quantum invariants is the theory of geometric quantization. Given a Kähler manifold $(M,\omega, J)$ with $\textrm{dim}(M)=2 n$, we define a prequantization bundle as a complex line bundle with an Hermitian form $h$ that has Chern curvature $\frac{1}{i}\omega$ and a half-form bundle, that is a square root of the bundle of complex $n$-forms. For $L$ a prequantization bundle and $\delta$ a half-form bundle, we have a sequence of (finite dimensional when $M$ is compact) vector spaces $H^0 (M,L^r\otimes \delta)$: the spaces of holomorphic sections of $L^r\otimes \delta$. 
\\ A natural candidate to present the vector space $V_r(\Sigma)$ and curve operators $T_r^{\gamma}$ as arising from the geometric quantization of some Kähler manifold and function is the moduli space $\mathcal{M}(\Sigma)=\textrm{Hom}(\pi_1\Sigma ,\su)/\su$ of representations of the fundamental group of $\Sigma$ in $\su$ modulo conjugation. This space has a natural symplectic form on it, defined by Atiyah and Bott in \cite{AB82}.
\\ Also, in the setting of geometric quantization, each smooth integrable function $f$ on $\mathcal{M}(\Sigma)$ is associated with a sequence of endomorphisms of $H^{0}(M,L^r \otimes \delta)$ called a Toeplitz operator of symbol $f$. Curve operators $T_r^{\gamma}$ will be represented as Toeplitz operators with principal symbol the trace functions $f_{\gamma}(\rho)=-\textrm{Tr}(\rho(\gamma))$ which are continuous functions on $\mathcal{M}(\Sigma)$.
\\ Then, results of microlocal analysis state that the joint eigenvectors of such Toeplitz operators concentrate on the level sets of their principal symbol. Thus, sequence of vectors $\varphi_{\alpha_r}$ with $\frac{\alpha_r}{r} \underset{r \rightarrow +\infty}{\rightarrow} x$ should carry most of their mass on a neighborhood of the set $\Lambda_{x}=\lbrace \rho \, / \, \tr(\rho(C_i))=-2 \cos(\pi x_i) \rbrace$.
\\ The final goal of this paper is to compute the asymptotic expansion of pairings $\langle \varphi_{\alpha} ,\psi_{\beta} \rangle$ of basis vectors of $V_r(\Sigma)$ corresponding to two pants decompositions of $\Sigma$. There are two ways of thinking of these pairings: if one decomposition is the image of the other by an element of the mapping class group $\Gamma_g$ of $\Sigma$, what we compute is a limit of matrix coefficients of the quantum representations of $\Gamma_g$ on $V_r(\Sigma)$.
\\ Alternatively, the pairings can be viewed as special Reshethikhin-Turaev invariants: choose two handlebodies, one corresponding to each pants decomposition, insert a trivalent colored graph in each with colorings $\alpha$ and $\beta$, and glue them together to obtain a 3-manifold with 2 trivalent-colored graphs inside, which represent some linear combination of links. The pairing $\langle \varphi_{\alpha},\psi_{\beta} \rangle$ is just the Reshetikhin-Turaev invariant of this manifold with links.
\\ The vectors $\varphi_{\alpha_r}$ and $\psi_{\beta_r}$ concentrate on $\Lambda_{\frac{\alpha_r}{r}}$ and $\Lambda_{\frac{\beta_r}{r}}'$ where $\Lambda_y '=\lbrace \rho \, / \, \rho(D_j)=-2 \cos(\pi y_j) \rbrace$ when $r \rightarrow \infty$. Under some condition of genericity, the Lagrangian $\Lambda_x$ and $\Lambda_y '$ have a transverse intersection, consisting only of a finite number of points, and we show that the pairing $\langle \varphi_{\alpha_r},\psi_{\beta_r} \rangle$ has an asymptotic expansion consisting of a sum of contributions of these points as follows:
$$ \langle \varphi_{\alpha_r} ,\psi_{\beta_r} \rangle = u_r (\frac{r}{2 \pi})^{-\frac{n}{2}} \frac{1}{\sqrt{V_x V_y '}}\underset{z \in \Lambda_{\frac{\alpha_r}{r}} \cap \Lambda_{\frac{\beta_r}{r}} '}{\sum} \frac{e^{i r \eta (z_0 , z)}}{ |\det(\lbrace \mu_i , \mu_j ' \rbrace)|^{\frac{1}{2}}}i^{m(z_0,z)} +O(r^{-\frac{n}{2}-1})$$
where we set $\textrm{dim}(\mathcal{M}(\Sigma))=2 n$ (that is $n=3 g -3$), $u_r$ is a sequence of complex numbers of moduli $1$, the functions $\mu_i=-\tr (\rho (C_i))$ and $\mu_j '=-\tr (\rho (D_j))$ are the principal symbols of $T_r^{C_i}$ and $T_r^{D_j}$, $\lbrace \cdot , \cdot \rbrace$ is the Poisson bracket in $\mathcal{M}(\Sigma)$, $V_x$ (resp $V_y '$) is the volume of $\Lambda_x$ (resp. $\Lambda_y '$) for the volume form $\beta=d \theta_1 \wedge \ldots \wedge d \theta_n$ (resp. $\beta ' =d \theta_1 ' \wedge \ldots d \theta_n '$) where $d \theta_i$ (resp. $d \theta_i '$) are basis of $T^* \Lambda_x$ (resp. $T^* \Lambda_y '$) dual to the Hamiltonian vector fields $X_i$ of $\mu_i$ (resp. $X_i '$ of $\mu_i'$). 
\\ Moreover, $z_0$ is some specific point in the finite intersection, and for $z \in \Lambda_x \cap \Lambda_y '$, if we choose $\gamma_{z_0,z}$ a loop consisting of a path from $z_0$ to $z$ in $\Lambda_x$ and a path from $z$ to $z_0$ in $\Lambda_y '$, then $\eta (z_0 ,z)$ is the holonomy of the prequantization bundle $L$ along $\gamma_{z_0 ,z}$. The line bundle $L$ having curvature $\frac{1}{i}\omega$, the quantity $\eta(z_0 , z)$ can be defined alternatively as follows: take an oriented disk $D(z_0 ,z)$ in $\mathcal{M}(\Sigma)$ whose boundary is the loop $\gamma_{z_0 ,z}$. Then $\eta(z_0, z)$ is its symplectic area: $\eta (z_0 ,z)=\int_{D(z_0 ,z)} \omega$  
\\ Finally, $m(z_0 ,z) \in \mathbb{Z}$ and is some kind of Maslov index, the computation of which is explained in Section \ref{sec:pairing}. A path from $z_0$ to $z$ along $\Lambda_x$ induces a path in the oriented Lagrangian Grassmanian $LG^+ (\mathcal{M}(\Sigma))$, similarly for the path from $z$ to $z_0$ in $\Lambda_y '$. We connect these to get a loop in the oriented Lagrangian Grassmanian by turning "positively" in the oriented Lagrangian Grassmanian of $T_z \mathcal{M}(\Sigma)$ and $T_{z_0} \mathcal{M}(\Sigma)$. The index $m(z_0 ,z)$ ought to correspond the homotopy class of this loop in $\pi_1 LG^+ (\mathcal{M}(\Sigma))=\mathbb{Z}$.
\\ Notice that the definition of $\eta(z_0 ,z)$ depends only on the homotopy class of the loop $\gamma_{z_0 ,z}$ as the line bundle $L$ is a flat bundle on the Lagrangian $\Lambda_x$ and $\Lambda_y '$. Actually, the quantity $r \eta (z_0 ,z) + \frac{\pi}{2}m(z_0 ,z)$ will be also independent of this homotopy class modulo $2 \pi \mathbb{Z}$, as a result of Bohr-Sommerfeld conditions. 
\\ Finally, it is remarkable that the first term of the asymptotic expansion of this pairing is defined uniquely from the positions of the Lagrangian $\Lambda_x$ and $\Lambda_y '$ and the symplectic structure of $\mathcal{M}(\Sigma)$, though the machinery of geometric quantization introduced also a Kahler structure. It had to be expected as the quantum invariants of which we computed an asymptotic expansion are topological invariants, and do not depend on the complex structure.
\\
\\ The idea of linking curve operators in TQFT to Toeplitz operators originates in the work of Andersen \cite{And06}. Andersen works in the geometrical viewpoint of TQFT, reprensenting TQFT vector spaces $V_r(\Sigma)$ as spaces $V_r^{\sigma}(\Sigma)$ of holomorphic sections on the moduli space depending on the choice of a complex structure $\sigma$ on $\Sigma$, and introduces in \cite{And06} some Toeplitz operators with trace functions as principal symbols, and shows that they approximate curve operators at first order. This approach proved rapidly fruitful: Andersen was able to use these Toeplitz operators to derive the asymptotic faithfulness of the quantum representations of the mapping class group \cite{And06}, as well as other results \cite{And08}\cite{And09}\cite{And10}.
\\ The geometrical viewpoint of TQFT makes an heavy use of the Hitchin connection to relate the $V_r^{\sigma}(\Sigma)$ for different choices of $\sigma$, making the computations quite unexplicit. Thereas, in this paper, as we using only the skein-theoretic presentation of TQFT, we are able to present a simple and explicit isomorphism between $V_r(\Sigma)$ and holomorphic sections on some prequantized manifold $M$. The drawback is that $M$ is a manifold with boundary, whereas Andersen is able to work on smooth closed manifolds.
\\
\\ The present paper takes inspiration in the work of Paul and Marché \cite{MP}, in which they showed an asymptotic formula for curve operators on the pointed torus and the four-holed sphere, then presented curve operators on these two surfaces as Toeplitz operators with trace functions as principal symbols, and deduced from such a presentation the asymptotics of quantum $6j$-symbols and coefficients of the pointed $S$-matrix. 
\\ They also conjectured in \cite{MP} that their results would apply for arbitrary compact oriented surfaces. The author devoted the paper \cite{Det} to the generalization of the first part of their work, proving the conjectured asymptotic formula for curve operators on general surfaces. We will recall this result in Section \ref{sec:tqft} as it will be needed in further sections.
\\ Finally, after presenting in this paper curve operators on arbitrary surfaces $\Sigma$ as Toeplitz operators, we will apply this discussion to compute the asymptotics of some quantum invariants.
\\ 
\\ Since their discovery the asymptotic behavior of the Witten-Reshetikhin-Turaev invariants $Z_r(M)$ of 3-manifolds $M$ has been a big object of interest. Using his path integral description of invariants, Witten conjectured an asymptotic expansion for $Z_r(M)$. The formula for this expansion, first mentioned in \cite{Jef92} is:
$$Z_r(M)=(1+O(r^{-1}))\underset{\rho \in \mathrm{Hom}(\pi_1 M ,\su)/\su}{\sum} e^{2 i \pi r CS(\rho)} r^{\frac{h^1(\rho))-h^0 (\rho))}{2}} \sqrt{\mathrm{Tor}(M,\rho)}I_{\rho}$$ 
where $CS$ is the Chern-Simons functional, $h^i(\rho)=\dim(H^i(M,\mathrm{Ad}(\rho))$ are twisted cohomology groups of $M$ for the adjoint representation of $\rho$, $\mathrm{Tor}$ is the Reidemeister torsion and $I_{\rho}$ is an root of unity of order $8$, which can be computed using a spectral flow.
\\
\\ The semi-classical technics of this paper used to compute quantum invariants by counting contributions of intersections of Lagrangians follow the spirit of a series of two papers by Charles and Marché \cite{CM11a}\cite{CM11b}. 
Their paper establishes that the Reshetikhin-Turaev invariants of Dehn fillings of the figure-eight knot satisfy the asymptotic expansion conjectured by Witten.
They used semi-classical analysis: the complement of the figure-eight knot, associated by the TQFT to a vector (or "knot state" of the figure-eight knot) in $V_r(\mathbb{T}^2)$ where $\mathbb{T}^2$ is the peripheral torus of the figure-eight knot.
The space $V_r(\mathbb{T}^2)$ was reinterpreted as a space of holomorphic sections on $\mathcal{M}(\mathbb{T}^2)$ and the knot state was shown to concentrate on the character variety of the figure-eight knot complement, allowing them to compute the Reshetikhin-Turaev invariants of Dehn fillings by adding contributions of intersections points of this character variety with some Lagrangian of $\mathcal{M}(\mathbb{T}^2)$.
\\ Other examples of 3-manifolds satisfying the Witten conjecture include many Seifert manifolds, as shown by various authors \cite{Jef92}\cite{Roz96}\cite{LZ99}\cite{Hik05}\cite{HT01}, finite order mapping tori as shown by Andersen \cite{And11} or mapping tori satisfying a transversality condition by a paper of Charles \cite{Cha10b}.
\\ The limits of the papers \cite{CM11a} and \cite{CM11b} is that explicit formulas of colored Jones polynomial of the figure-eight knot are used, making their approach difficult to generalize to arbitrary knots.
Similarly, most other proofs of specific cases of the Witten conjecture also use some explicit computations, and thus have only a narrow range of applications. 
\\
\\ Our asymptotic formula is in some sense a generalization of the formula of Witten: we give the asymptotic expansion of quantum invariants of gluings of two handlebodies with trivalent colored graphs in it. The Witten conjecture correspond to the case of trivial colorings of the trivalent graphs: if we choose a Heegaard decomposition $H \underset{\Sigma}{\cup} H'$ of $M$, the sum over representations $\pi_1 M \rightarrow \su$ is a sum over the intersection points of the sets of representations $\pi_1 \Sigma \rightarrow \su$ which can be extended to $\pi_1 H$ and $\pi_1 H'$ respectively. The Chern-Simons invariants is analog to our functionnal $\eta$, and the Reidemeister torsion to our determinant of the matrix of Poisson bracket. Finally $I_{\rho}$ looks like the Maslov index in our formula.
\\ However, the proof of our formula fails in the case of trivial colorings of the trivalent graphs: the Lagrangians of which we consider the intersections have always intersection in the boundary of $M$, corresponding to intersections in the singular part of $\mathcal{M}(\Sigma)$. Furthermore, proper vectors of Toeplitz operators at critical level of the principal symbol are not well understood.
\\ It is nonetheless possible that this approach could work if we had a deeper understanding of the singularities of $\mathcal{M}(\Sigma)$ and the process of geometric quantization in a singular setting.
\\
\\ \textbf{Acknowledgments:} The author would like to thank Julien Marché and Laurent Charles for many helpful discussions and for their constant support.

\section{Overview of the moduli space $\mathcal{M}(\Sigma)$}
\subsection{Pants decomposition and Hamiltonian torus action}
\label{sec:moduli}
Let $\Sigma$ be a closed compact oriented surface. We write $\mathcal{M}(\Sigma)=\textrm{Hom}(\pi_1 \Sigma , \su) /\textrm{SU}_2$ for the moduli space of representations of the fundamental group of $\Sigma$ in $\textrm{SU}_2$ modulo conjugation. It can be shown that $\mathcal{M}(\Sigma)$ is then a real algebraic variety.
\\ Alternatively, this space can be described as the space of $\textrm{su}_2$-connections $A \in \Omega^{1}(\Sigma ,\textrm{su}_2)$ , such that $dA +\frac{1}{2}[A \wedge A] =0$ (flatness condition), modulo the gauge action by $G=C^{\infty}(\Sigma,\textrm{SU}_2)$.
\\ The gauge action is given by $A^{g}=g Ag^{-1} +g^{-1} d g$ for any $g \in G$.
\\ There is a partition of $\mathcal{M}(\Sigma)$ given by the set of conjugacy classes of irreducible representations $\mathcal{M}^{irr}(\Sigma)$ and the set $\mathcal{M}^{ab}(\Sigma)$ of conjugacy classes of abelian representations. When the surface $\Sigma$ is of genus $g \geq 2$, the algebraic variety $\mathcal{M}(\Sigma)$ is smooth at $[\rho]$ if and only if $\rho$ is an irreducible representation. 
\\If the connection $A$ represents $\rho$, then the tangent space $T_{\rho} \mathcal{M}(\Sigma)$ at $A$ is then given by all 1-forms $\alpha \in \Omega^{1}(\Sigma ,\textrm{su}_2)$, such that $d \alpha +[\alpha \wedge A] =0$, modulo gauge action by $\Omega^0 (\Sigma,\textrm{su}_2)$ acting by translating $\alpha$ by $d\xi +[A ,\xi]$, for any $\xi \in \Omega^0 (\Sigma ,\textrm{su}_2)$. 
\\Furthermore, a natural symplectic structure on $\mathcal{M}^{irr}(\Sigma)$ was introduced by Atiyah and Bott \cite{AB82} and then Goldman \cite{Gol}. This symplectic structure depends on a choice of normalization: for $\alpha$ and $\beta \in T_{A}\mathcal{M}(\Sigma)$ we choose the normalization:
$$\omega_A (\alpha ,\beta)= \frac{1}{2 \pi}\int_{\Sigma} \textrm{Tr}(\alpha \wedge \beta)$$
A pants decomposition of $\Sigma$ is a family of simple closed curves $\mathcal{C}=\lbrace C_e \rbrace_{e \in E}$ that separate $\Sigma$ into a disjoint union of three-holed sphere. We write $S$ for the set of triples $(e,f,g) \in E^3$ such that $C_e$, $C_f$, and $C_g$ bound a pair of pants in the decomposition. This data gives rise to an Hamiltonian torus action on $\mathcal{M}(\Sigma)$ by a torus of dimension $|E|$. Such an action is characterized by its momentum mapping:  
$$\begin{array}{cccc}
\mu : & \mathcal{M}(\Sigma) & \rightarrow & \mathbb{R}^E \\
  & \rho & \rightarrow &  h_{\mathcal{C}}(\rho)
\end{array}$$

where the application $h_{\mathcal{C}}$ is given by its components:
$$ h_{C_e}(\rho)=\frac{1}{\pi}\textrm{Acos}(\textrm{Tr}(\rho(C_e)))$$
which can be shown \cite{Gol} to be Poisson commuting functions on $\mathcal{M}(\Sigma)$.
\\ The Hamiltonian flows of these Poisson commuting functions give the action:
$$\begin{array}{ccc}\mathbb{R}^{E} & \rightarrow & \mathcal{M}(\Sigma) \\
    (\theta_e)_{e \in E} &\rightarrow & \theta \cdot \rho 
\end{array}$$
 The image of the momentum mapping $\mu$ is a polytope $P$ inside $\mathbb{R}^E$. The polytope $P$ consists of all $(x_e)_{e \in E} \in \mathbb{R}^{E}$ such that, if $(e,f,g) \in S$, then:
\begin{itemize}
  \item[(i)] $|x_f -x_g| \leq x_e \leq x_f +x_g$
 \item[(ii)] $x_e+x_f+x_g \leq 2$
\end{itemize} 
The momentum mapping $\mu$, and the associated Hamiltonian flows were described by Jeffrey and Weistmann \cite{JW}, see also \cite{Gol}. 
\\Given a choice of orientation of the curves $C_e$, the Hamiltonian action $\mathbb{R}^E \circlearrowright \mathcal{M}(\Sigma)$ can actually be lifted to an action on $\textrm{Hom}(\pi_1(\Sigma),\textrm{SU}_2)$ which acts on representations as follows: we pick a point of $C_e$ as base point of $\pi_1 \Sigma$. We can also assume up to conjugation that $\rho (C_e)$ is diagonal. Any element of $\pi_1 \Sigma$ is a product of loops intersecting $C_e$ at most once.
\\ If the curve $C_e$ is nonseparating the image of such a loop $\gamma$ by the representation $\theta_e \cdot \rho$ is $\rho(\gamma)$ if $\gamma$ has zero algebraic intersection with $C_e$.
\\ If the algebraic intersection of $\gamma$ and $C_e$ is one we set $(\theta_e \cdot \rho)(\gamma)=U_{\theta_e} \rho(\gamma)$ where
$U_{\theta_e}$ is the matrix $\begin{pmatrix} e^{ i \theta_e} & 0 \\ 
                0 & e^{- i \theta_e}
\end{pmatrix}$.
\\ These conventions suffice to define a representation $\theta_e \cdot \rho$ so that the corresponding action on $\mathrm{Hom}(\pi_1 \Sigma , \su)$ lifts the Hamiltonian action of $h_{C_e}$ on $\mathcal{M}(\Sigma)$. If the curve $C_e$ is separating, we have to conjugate some of these holonomies, see \cite{Gol} for details.
\\ In \cite{JW} the kernel of the action on $\mathcal{M}(\Sigma)$ was computed and shown to be some explicit lattice $2 \pi \Lambda$ in $\mathbb{R}^E$. For $e \in E$ let $u_e$ be the vector in $\mathbb{R}^E$ such that all components of $u_e$ vanish expect the $e$-th component which is $1$. For $v=(e,f,g) \in S$ we also introduce the vector $u_v =\frac{u_e+u_f+u_g}{2}\in \mathbb{R}^E$. Note that the same label can appear twice in $v=(e,f,g)$. 
\\ Then [JW-prop 5.2] shows that:
$$ \Lambda = \textrm{Vect}_{\mathbb{Z}}\lbrace (u_e)_{e \in E} , \  (u_v)_{v \in S} \rbrace $$
and furthermore the action of $T=\mathbb{R}^{E}/2 \pi \Lambda$ is free on $\mu^{-1}(\mathring{P})$. 
\\ Now, suppose we set $\omega_P =\sum dx_i \wedge d \theta_i $ on $\mathring{P}\times T$. Then $\omega_P$ is a symplectic form on $\mathring{P}\times T$.
\\Given a Lagrangian section $s : P \rightarrow \mathcal{M}(\Sigma)$ of the momentum map, the map:
$$\begin{array}{ccccc} \rho & : & \mathring{P} \times T & \longrightarrow  & \mathcal{M}(\Sigma) \\
                       &   & (x,\theta) & \longrightarrow & \rho_{x,\theta}=\theta \cdot s(x)
\end{array}$$
 maps $\mathring{P} \times T$ into the open subset $\mu^{-1}(\mathring{P})$ of $\mathcal{M}(\Sigma))$. As $\mu^{-1}(\partial P)$ is a strict real algebraic subvariety of $\mathcal{M}(\Sigma)$, the image $\mu^{-1}(\mathring{P})$ is dense in $\mathcal{M}(\Sigma)$. The condition that $s$ is an Lagrangian section of $\mu$ ensures that $\mu(\rho_{x,\theta})=x$ and that the parametrization sends the $2$-form $\underset{e}{\sum} d x_e \wedge d \theta_e$ on $\mathring{P} \times T$ to the symplectic form $\omega$ on $\mathcal{M}(\Sigma)$. 
\\ Such a parametrization is called an action-angle parametrization of $\mu^{-1}(\mathring{P})$, the $x$-coordinates are called action coordinates and $\theta$-coordinates are angle coordinates. 
 
\section{TQFT and geometric quantization}
This section is devoted to the definition of TQFT spaces $V_r(\Sigma)$ associated to each level $r \in \mathbb{N}^*$ and each closed oriented surface $\Sigma$, as well as the definition of the curve operators acting on these spaces. We begin by a quick overview of the combinatorial framework for TQFT of \cite{BHMV}, then we rebuild these objects in a more analytic framework in Subsection 2.2.  
\subsection{TQFT spaces and curve operators}
\label{sec:tqft}
For $M$ a 3-manifold and $A \in \mathbb{C}$, we will denote by $K(M,A)$ the Kauffman module of $M$ at $A$. The well-known combinatorial construction of Witten-Reshetikhin-Turaev TQFT by \cite{BHMV} associates to each compact oriented surface $\Sigma_g$ of genus $g$ and each $r>0$ is associated a vector space $V_r(\Sigma_g)$. This vector space is obtained as a quotient of the Kauffman module $K(H_g ,\zeta_r)$ where $\zeta_r=-e^{i \frac{\pi}{2 r}}$ of $H_g$, a handle body of boundary $\Sigma_g$. More precisely, it is the quotient of this vector space by all negligible elements:
\ if $L \in K(H_g ,\zeta_r)$ and $L' \in K(H_g ' ,\zeta_r)$ where $H_g \cup H_g ' =S^3$ we have a pairing $\langle L ,L' \rangle \in K(S^3 ,\zeta_r)$. We call $L$ a negligible element if this pairing vanishes for all $L' \in K(H_g ' ,\zeta_r)$. Let $N_r$ the space of negligible elements in $K(H_g,\zeta_r)$, we then have
$$V_r(\Sigma_g)=K(H_g,\zeta_r)/N_r$$
Though our definition may seem like it depends on the choice of an handlebody $H_g$, it follows from \cite{BHMV} that the dimension of $V_r(\Sigma_g)$ is independent of this choice.
\\
With this definition, we see that $K(\Sigma,\zeta_r)$ the Kauffman algebra of $\Sigma$ acts on $V_r(\Sigma)$: indeed $z \in K(\Sigma ,\zeta_r)$ acts on $h \in K(H_g,\zeta_r)$ by stacking $z$ over $h$ to obtain an element of $K(H_g,\zeta_r)$ and it is easy to see that this action maps negligible elements to negligible elements. Given a simple closed curve $\gamma$ on $\Sigma$, we call the induced operator on $V_r(\Sigma)$ the curve operator $T_r^{\gamma}$.
\\ According to \cite{BHMV}, $V_r(\Sigma)$ has a natural Hermitian structure. Furthermore, given a pants decomposition by curves $(C_e)_{e \in E}$ of $\Sigma$, we choose a trivalent banded graph $\Gamma$ embedded in $\Sigma$ with the following properties: First the graph $\Gamma$ has one trivalent vertex in each pants of the decomposition. Secondly we require $\Gamma$ to have one edge $e$ for each curve $C_e$ of the decomposition with the additional property that the edge $e$ cuts $C_e$ exactly once and joins the vertex corresponding to the pans on each side of the curve $C_e$. We will call such a graph a dual graph to the pants decomposition of $\Sigma$.
\\ Then \cite{BHMV} gave an orthonormal basis of $V_r(\Sigma)$ as follows: the basis $(\varphi_c)$ is indexed by $r$-admissible colorings of the edges of $\Gamma$.
\\ An $r$-admissible coloring of $\Gamma$ is an application $c \ : \ E \rightarrow \mathbb{N}$ such that $\forall (e,f,g) \in S$
\begin{itemize} 
\item  $c_e+c_f+c_g <2 r$
\item  $c_e+c_f+c_g$ is odd 
\item  $|c_e-c_f| <c_g <c_e+c_f$
\end{itemize}
Note that the conditions above differ slightly from that of \cite{BHMV}: we shifted all colors by one, which will be convenient later. With this conditions, we must have $c_e \in \lbrace 1,2, \ldots ,r-1 \rbrace$ for all $e \in E$. The vectors $\varphi_c$ are of norm $1$, and are obtained as a specific combination of links in $H_g$, see \cite{Det} for details.
Furthermore, if $c$ is not an $r$-admissible coloring, by convention, we set $\varphi_c=0$. We will denote by $I_r$ the set of admissible colorings. We also denote by $I_{\infty}$ the set of $c \, : \, E \rightarrow \mathbb{N}$ satisfying the last two of the three conditions above.
\\
Finally, we will use the following identity which describes the asymptotic behavior of curve operators:
\begin{theo} \label{curvop}\cite{Det} Let $\gamma$ be a simple closed curve on $\Sigma$ and let $M_e =\sharp (\gamma \cap C_e)$. We also suppose that $\Gamma$ is a planar graph (that is the ribbon-graph $\Gamma$ can be embedded in the plane). Then there exists functions $F_k^{\gamma}$ indexed by $k \ : \ E \rightarrow \mathbb{Z}$, such that
\begin{itemize} 
\item[-]$F_k^{\gamma}$ is analytic on $V_{\gamma}=\lbrace (x,h) \in \mathbb{C}^E \times \mathbb{R}^+ \ / \ (\textrm{Re}(x_e) +\varepsilon_e M_e h) \in \mathring{P} , \ \forall \varepsilon \in \lbrace \pm 1 \rbrace^E \rbrace$
\item[-]$F_k^{\gamma}=0$ if there exists $e \in E$ such that $|k_e|>M_e$
\item[-]For any $r$-admissible coloring $c$, we have :
$$T_r^{\gamma} \varphi_c= \underset{k :E \rightarrow \mathbb{Z}}{\sum} F_k^{\gamma}(\frac{c}{r},\frac{1}{r})\varphi_{c+k}$$
\item[-]If, for $(\tau,h) \in V_{\gamma}$ and $\theta \in \mathbb{R}^E/\Lambda$, we set $$\sigma^{\gamma}(\tau ,\theta ,h)=\underset{k:E \rightarrow \mathbb{Z}}{\sum} F_k^{\gamma}(\tau,h)e^{i k \theta}$$
then we have the following asymptotic expansion:
$$\sigma^{\gamma}(\tau ,\theta ,h)=-\textrm{Tr}(\rho_{\tau ,\theta}(\gamma)) +\frac{h}{2 i}\underset{e \in E}{\sum} \frac{\partial^2}{\partial \tau_e \partial \theta_e}(-\textrm{Tr}(\rho_{\tau ,\theta}(\gamma))+O(h^2)$$
where $\rho_{\tau ,\theta}$ is a parametrization of $h_{\mathcal{C}}^{-1}(\mathring{P})$ by action-angle coordinates. The $O(h^2)$ is uniform on compact subsets of $\mathring{P} \times \mathbb{R}^E /\Lambda$
\end{itemize}
\end{theo}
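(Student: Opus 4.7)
The plan is to compute $T_r^\gamma \varphi_c$ by localizing at the intersections $\gamma\cap\mathcal{C}$ and then to extract the symbol asymptotics from the known expansion of the quantum $6j$-symbols; the planarity of $\Gamma$ is what lets one do the recoupling without braiding corrections. First I would put $\gamma$ in minimal position so that $|\gamma\cap C_e|=M_e$, decompose it into arcs lying in successive pairs of pants, and resolve each intersection with an edge of $\Gamma$ by the fusion rule for two Jones-Wenzl strands. Each crossing of $\gamma$ with $C_e$ shifts the color $c_e$ by at most one, so after $M_e$ crossings at edge $e$ the total shift $k_e$ satisfies $|k_e|\leq M_e$. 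Assembling the local contributions via Racah recoupling produces
$$T_r^\gamma \varphi_c = \sum_{k:E\to\mathbb{Z}} F_k^\gamma\!\bigl(\tfrac{c}{r},\tfrac{1}{r}\bigr)\,\varphi_{c+k},$$
where $F_k^\gamma$ is an explicit finite sum of products of quantum $6j$-symbols and quantum dimensions, and the support condition $|k_e|>M_e \Rightarrow F_k^\gamma =0$ is built in.

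Analyticity is then immediate from the formulas: each $6j$-symbol is a rational expression in the quantum integers $[n]=\sin(n\pi/r)/\sin(\pi/r)$, so the substitution $c/r\mapsto \tau$, $1/r\mapsto h$ extends $F_k^\gamma$ to an analytic function on the open set where all triangular inequalities defining admissibility remain strict at the shifted arguments $\tau_e\pm M_e h$; this is exactly the domain $V_\gamma$ in the statement.

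The hardest part is the asymptotic expansion of $\sigma^\gamma$. For the principal symbol, I would combine the classical limit $A=-1$ of the Kauffman skein relation with Goldman's identification of trace functions and their Poisson brackets to match each $6j$-symbol contribution with the classical trace of the corresponding holonomy arc; iterating along $\gamma$ and using the classical identity $\sigma^{\alpha\cup\beta}=-\sigma^\alpha \sigma^\beta$ yields $-\tr(\rho_{\tau,\theta}(\gamma))$. For the subprincipal term, the key input is the Ponzano-Regge-type asymptotic expansion of the $6j$-symbols up to order $1/r$. The $\theta_e$-dependence enters through the diagonal matrix $U_{\theta_e}$ conjugating the holonomy across $C_e$, so $\partial/\partial\theta_e$ records how its diagonal entries shift; combined with $\partial/\partial\tau_e$ arising from differentiating the $6j$-symbol with respect to its spin arguments, this produces the operator $\tfrac{h}{2i}\sum_e \partial_{\tau_e}\partial_{\theta_e}$ applied to the principal symbol. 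I expect pinning down this subprincipal term, and in particular the coefficient $\tfrac{1}{2i}$ rather than some other constant, to be the main technical obstacle: it requires a delicate stationary-phase analysis of the $6j$-symbols, a careful tracking of half-integer weight shifts in the Wigner matrices, and the planarity of $\Gamma$ to rule out spurious braiding phases that would otherwise contaminate the computation.
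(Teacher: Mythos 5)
This theorem is not proved in the paper: it is cited from \cite{Det}, and the paper only gives a one-line account of the method there, namely ``fusion rules, the description of the Kauffman algebra as a deformation algebra of the algebra of regular functions on $\mathcal{M}(\Sigma)$ and the algebraic properties of curve operators.'' Your sketch agrees with that description in its first step: resolving $\gamma$ across the pants decomposition via fusion rules gives the matrix form $T_r^\gamma\varphi_c=\sum_k F_k^\gamma(c/r,1/r)\varphi_{c+k}$, the support bound $|k_e|\leq M_e$, and the analyticity of each $F_k^\gamma$ on $V_\gamma$. After that it diverges. You propose to extract the principal and subprincipal symbol from a Ponzano--Regge-type stationary-phase analysis of the quantum $6j$-symbols; the cited proof instead leans on the fact that the Kauffman skein algebra at $\zeta_r=-e^{i\pi/2r}$ is a deformation of the Poisson algebra of trace functions on $\mathcal{M}(\Sigma)$, so that curve operators obey a product rule and a commutator rule encoding Goldman's bracket, and one deduces the form of the total symbol $\sigma^\gamma$ from those algebraic constraints together with a direct check on the generating curves $C_e$. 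That route avoids the saddle-point estimate entirely. Yours is essentially the Marché--Paul strategy from \cite{MP}, which works well for the one-holed torus and the four-holed sphere; what the algebraic route buys in arbitrary genus is that the normalization of the subprincipal term is forced by the commutation relations rather than by a delicate $6j$-asymptotic.

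That distinction matters because your argument has a genuine gap exactly where the two approaches part ways. You flag the subprincipal coefficient $\tfrac{1}{2i}$ as the main obstacle, and your sketch stops short of computing it: a higher-genus version of your plan would need a uniform $O(1/r)$ expansion of the $6j$-symbols valid over the whole admissibility polytope, together with a proof that, when you compose the local contributions from all pairs of pants, the cross-terms assemble precisely into the operator $\tfrac{h}{2i}\sum_e\partial_{\tau_e}\partial_{\theta_e}$ applied to $-\tr\rho_{\tau,\theta}(\gamma)$ and nothing else. You assert the planarity of $\Gamma$ suppresses braiding phases, which is correct, but you give no mechanism that makes the subleading contributions from distinct pants add up coherently rather than produce additional terms involving mixed derivatives $\partial_{\tau_e}\partial_{\theta_f}$ with $e\neq f$, or second $\tau$-derivatives. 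Until that bookkeeping is closed, the last bullet of the theorem — the specific form of the subprincipal term, including the uniformity of the $O(h^2)$ on compacts — is not established by your sketch.
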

Note that an action-angle coordinate as defined in Section \ref{sec:moduli} is unique only up to a shift in angle coordinates. The paper \cite{Det} explains exactly what action-angle parametrization has to be chosen, but we will not need it here.
\\
When $\Gamma$ is not a planar graph, these formulas are shifted by signs using relative spin structures on $(\Gamma,\partial \Gamma)$, see \cite{Det}. In the remaining of the paper, we will always consider pants decompositions that have a planar dual graph. It is easy to construct a pants decomposition of each surface of genus $g$ that has a planar dual graph, and applying the action of the mapping class group, we can construct many others such decomposition.
\\ This asymptotic expansion for the matrix coefficient was first remarked and proved by Marché and Paul in \cite{MP} in the special cases of the four-holed sphere and the one-holed torus, while the general result for arbitrary compact oriented surface $\Sigma$ was enonced and proven by the author in \cite{Det} . The proof used fusion rules, the description of the Kauffman algebra as a deformation algebra of the algebra of regular functions on $\mathcal{M}(\Sigma)$ and the algebraic properties of curve operators. The spirit of the next section is to use this formulas to view the curve operator $T_r^{\gamma}$ associated to a curve $\gamma$ on $\Sigma$ as a Toeplitz operator with principal symbol the trace function $\sigma^{\gamma}(\tau ,\theta)=-\textrm{Tr}(\rho_{\tau,\theta}(\gamma))$ which is a function on the subset $\mu^{-1}(\mathring{P})$ of $\mathcal{M}(\Sigma)$.
\subsection{TQFT vector spaces $V_r(\Sigma)$ as spaces of holomorphic sections of line bundles}
\label{sec:holosec}
We want now to translate the combinatorial definition of the TQFT space of Subsection 2.1 in an analytic framework, and see the $V_r(\Sigma)$ as spaces of holomorphic $\mathcal{L}^2$ sections of a complex line bundle over a Kähler manifold. 
\\ Since the discovery of Witten-Reshetikhin-Turaev TQFT, it has been a popular endeavor to link the combinatorial definition of TQFT with a definition based on geometric quantization. Given a compact Kähler manifold $M$, with a prequantization line bundle $L$ (that is a line bundle with Chern curvature $\frac{1}{i}\omega$) and a half-form bundle, we have a sequence of  vector spaces $V_r=H^{0}(M ,L^r\otimes \delta)$, and any continuous function $f$ on $M$ gives rise to a sequence of operators $\mathcal{T}_r^{f}=\Pi_r m_f $ where $\Pi_r$ is the orthogonal projector from $\mathcal{L}^2$ sections of $L^r \otimes \delta$ to the space of holomorphic sections. 
\\ The natural geometric object to represent the combinatorial TQFT spaces is then the moduli space $\mathcal{M}(\Sigma)$ together with its Chern-Simons bundle, and a half-form bundle. The problem is that for a general genus $g$, $\mathcal{M}(\Sigma)$ is not smooth, and also there is no canonical choice of complex structure to work with. These hurdles can be solved: for each complex structure $\sigma$ on $\Sigma$ the geometric quantization process yields TQFT spaces $V_r^{\sigma}(\Sigma)$ and there is a connection on the Teichmüller space of $\Sigma$ called the Hitchin connection, which gives a way of identifying the various $V_r^{\sigma}(\Sigma)$ arising from different complex structures on $\mathcal{M}(\Sigma)$ (see \cite{H}). The non-smoothness of $\mathcal{M}(\Sigma)$ is usually avoided of by working with the moduli space of $\Sigma$ with a puncture and choosing appropriate holonomy around the puncture  instead of the moduli space of $\Sigma$. 
\\ It has been showed in \cite{AU} that the TQFT defined by the geometric approach is isomorphic to the combinatorial one. However, the geometric approach to TQFT loses some of the structure of the combinatorial approach: it is not clear how to geometrically define the Hermitian structures on the vector spaces $V_r(\Sigma)$, or the natural basis associated to pants decompositions, also, the identification is quite unexplicit. 
\\ As we wish to use analysis to study pairings of such basis vectors, we will take another approach. Instead of using $\mathcal{M}(\Sigma)$ to do geometric quantization procedures, we will use some open subset of $\mathcal{M}(\Sigma)$ associated to a pants decomposition of $\Sigma$: the set of regular points of the momentum map associated to the pants decomposition. Then we are able to very easily define a complex structure on this set, and to exhibit an isomorphism between $V_r(\Sigma)$ and a space of holomorphic sections over this open subset. 
\\
\\  Given a pants decomposition of $\Sigma$, the set $\mu^{-1}(\mathring{P})$ of regular points of the associated momentum map is an open dense subset of $\mathcal{M}(\Sigma)$ as described in Section \ref{sec:moduli}. By action-angle coordinates it is symplectomorphic to $\mathring{P} \times T$ equiped with the symplectic form $\omega=\sum dt_i \wedge d \theta_i$ . This open set can be seen as a submanifold of $M=\mathbb{R}^E \times T$ which we will equip with a Kähler structure.
\\ A symplectic form on $M$ is given by the formula $\omega=\sum dt_i \wedge d \theta_i$. The complex structure on $M$ will be induced by the map
$$\begin{array}{rlcl}
Z:&\mathbb{R}^E \times T & \rightarrow & (\mathbb{C}^*)^E/\Upsilon \\
 &(t_i , \theta_i) & \rightarrow & z_i=e^{\frac{t_i}{2}+i \theta_i}
\end{array}$$ and the usual complex structure of $(\mathbb{C}^*)^E$.
\\Here $\Upsilon$ is the discrete subgroup of $(\mathbb{C}^*)^E$ generated by $\lbrace \varepsilon^v \ , v \in S \rbrace$ where if $v=(e,f,g)$, then $\varepsilon^v_k=(-1)^{\delta_{e k}+\delta_{f k}+\delta_{g k}}$, where $\delta_{i j}$ is the Kronecker sympbol. In fact, $(\mathbb{C}^*)^E/\Upsilon$ is just isomorphic to $(\mathbb{C}^*)^E$, but it is easier to work with these coordinates to give an expression of the symplectic form $\omega$. Note that the symplectic form induced on $(\mathbb{C}^*)^E/\Upsilon$ by this map is the form $i\sum d w_j \wedge d \overline{w_j}$, where $w_j=\ln (z_j)$, for a local determination of the logarithm.
\\ We endow the line bundle $M \times \mathbb{C}$ with the Hermitian form $h$ such that $h(t,\theta)(1)=e^{-\varphi}$ where $\varphi=\frac{||t||^2}{2}=\frac{1}{2}\sum \ln(z_i \overline{z_i})^2$. The Chern curvature of this complex line bundle is $\partial \overline{\partial}\varphi= \sum d w_j \wedge d\overline{w_j}=\frac{1}{i}\omega$. That is, this is a prequantization bundle.
\\
\\ The manifold $M$ also carries a half-form bundle $\delta$: the bundle of $n$-form is trivial as the $n$-form $\frac{d z_1}{z_1}\wedge \ldots \wedge \frac{d z_n}{z_n}$ is well-defined globally (because the action of $\Upsilon$ leaves each $\frac{d z_i}{z_i}$ invariant). The square root of the $n$-form bundle are then parametrized by $H^1(M,\lbrace \pm 1 \rbrace )$. We then choose as half-form bundle a flat bundle with holonomy $-1$ along the loops $(t,\theta +\varphi u_v)_{0\leq \varphi \leq \pi}$ for $v \in S$. As we will see below, this choice will allow us to identify $H^{0}(M,L^r \otimes \delta)$ with a space spanned by monomials which share the same parity conditions as $r$-admissible colors.
\\
\\ Notice that with these definitions, we have a symplectomorphism between $\mathring{P}\times T \subset \mathbb{R}^E \times T$ and $\mu^{-1}(\mathring{P})\subset \mathcal{M}(\Sigma)$. Furthermore, the complex line bundle $L$ with Hermitian connection $h$ is constructed to have the same curvature and holonomy as the Chern-Simons bundle, and for $\delta$ a bundle on $\mu^{-1}(\mathring{P})$ with the same holonomy called the metaplectic bundle can also be defined, see \cite{Mar09}.   
\\
\\ We now want to build an isomorphism between $V_r(\Sigma)$ and holomorphic sections of $L^r \otimes \delta$ on $M$. The space of holomorphic sections of $L^r \otimes \delta$ has a Hermitian scalar product induced by the Hermitian structures on $L$ and $\delta$:
$$(s,s')=\int_{\mathbb{R}^E \times T} s \overline{s'}e^{-r \varphi}\frac{\omega^{n}}{n!}=\int_{\mathbb{R}^E \times T} s(t,\theta)\overline{s'(t,\theta)}e^{-\frac{r}{2}||t||^2}dt_1\ldots dt_n d\theta_1 \ldots d\theta_n$$
Let $\kappa$ be the constant such that $\frac{1}{\kappa}\int_{\mathbb{R}^E \times T}e^{-\frac{r}{2}||t||^2}dt_1\ldots dt_n d\theta_1 \ldots d\theta_n =1$
\\ (that is, $\kappa=\mathrm{Vol}(T)\left(\frac{r}{2\pi}\right)^{\frac{n}{2}}$, where we recall that $n=|E|$).
 
\begin{proposition}\label{isom}For $\alpha \in I_{\infty}$ the formula $$e_{\alpha}=\frac{z^{\alpha}}{||z^{\alpha}||}=\frac{1}{\sqrt{\kappa}} e^{\frac{t \cdot \alpha}{2}+i \alpha \cdot \theta} e^{-\frac{||\alpha||^2}{4 r}}$$ defines a holomorphic section of $L^r \otimes \delta$, and if we set $H_r=\mathrm{Vect}\{ e_{\alpha} \ , \alpha \in I_r \}$, the map 
$$\begin{array}{rlcl}\Phi_r : & V_r(\Sigma ) & \mapsto & H_r
\\ & \varphi_{\alpha} &\mapsto & e_{\alpha}
\end{array}$$
is an unitary isomorphism between $V_r(\Sigma)$ and $H_r$.
\end{proposition}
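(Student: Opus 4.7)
The plan is to reduce the proposition to verifying that $\{e_{\alpha}\}_{\alpha \in I_r}$ is an orthonormal family of holomorphic sections of $L^r \otimes \delta$. Since $\{\varphi_{\alpha}\}_{\alpha \in I_r}$ is an orthonormal basis of $V_r(\Sigma)$ and $H_r$ is by definition $\mathrm{Vect}\{e_{\alpha}, \alpha \in I_r\}$, the orthonormal family $\{e_{\alpha}\}$ would then automatically be an orthonormal basis of $H_r$, and $\Phi_r$ a unitary isomorphism $V_r(\Sigma) \to H_r$ by preservation of inner products. The entire content therefore lies in checking well-definedness, holomorphicity, and orthonormality of the $e_{\alpha}$.

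The first step is to verify that $z^{\alpha}$ descends from the universal cover of $M$ to a holomorphic section of $L^r \otimes \delta$ exactly when $\alpha$ has the parity required by $I_{\infty}$. On the cover $\mathbb{R}^E \times \mathbb{R}^E$, the expression $e^{\alpha \cdot w}$ with $w_i = t_i/2 + i\theta_i$ is manifestly holomorphic. Under the covering transformation $\theta \mapsto \theta + \pi u_v$ for $v = (e,f,g) \in S$, which corresponds via the map $Z$ to the action of $\varepsilon^v \in \Upsilon$, the monomial $z^{\alpha}$ picks up the sign $\prod_k (\varepsilon^v_k)^{\alpha_k} = (-1)^{\alpha_e+\alpha_f+\alpha_g}$. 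The parity condition defining $I_{\infty}$ forces this sign to be $-1$, which exactly cancels the prescribed holonomy $-1$ of $\delta$ along the same loop. Hence $z^{\alpha}$ together with the chosen square root of $\frac{d z_1}{z_1}\wedge\ldots\wedge\frac{d z_n}{z_n}$ descends to a well-defined holomorphic section of $L^r \otimes \delta$.

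Next, the orthonormality of the $e_{\alpha}$ is a direct computation. Using the Hermitian metric $h^r = e^{-r\varphi}$ with $\varphi = ||t||^2/2$ and the half-form normalization, one has
$$(z^{\alpha},z^{\beta}) = \int_{\mathbb{R}^E \times T} e^{\frac{(\alpha+\beta)\cdot t}{2}} e^{i(\alpha-\beta)\cdot\theta} e^{-\frac{r}{2}||t||^2} \, dt\, d\theta.$$
The $\theta$-integral over $T = \mathbb{R}^E/2\pi\Lambda$ vanishes unless $\alpha = \beta$ by Fourier orthogonality, using that $\alpha - \beta$ has even triple-sum parities and therefore defines a genuine character of $T$. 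For $\alpha = \beta$, completing the square in the Gaussian $t$-integral pulls out a factor $e^{||\alpha||^2/(2r)}$ times the defining integral $\kappa$, giving $||z^{\alpha}||^2 = \kappa \, e^{||\alpha||^2/(2r)}$. Thus $e_{\alpha} = z^{\alpha}/||z^{\alpha}||$ takes exactly the stated form and the family $\{e_{\alpha}\}_{\alpha \in I_r}$ is orthonormal.

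The main point to handle carefully is the parity/holonomy matching between the covering-transformation signs of $z^{\alpha}$ and the chosen holonomies of $\delta$ --- that is, pinning down why the odd-parity condition in admissibility is exactly the right condition for $z^{\alpha}$ to define a section. Once this bookkeeping is settled, the Gaussian computation and the deduction of unitarity from ``orthonormal basis to orthonormal basis'' are routine, and no information about the skein-theoretic construction of $\varphi_\alpha$ is needed beyond the fact that they form an orthonormal basis indexed by $I_r$.
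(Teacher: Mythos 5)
Your proof is correct and follows essentially the same route as the paper's, which is a terse two-sentence argument: (i) the parity condition defining $I_\infty$ is exactly the equivariance needed for $z^\alpha$ to descend to a section of $L^r\otimes\delta$, (ii) the $z^\alpha$ are orthogonal by inspection, and (iii) both $\varphi_\alpha$ and $e_\alpha$ are orthonormal families indexed by $I_r$, so $\Phi_r$ is unitary. What you've done is spell out steps (i) and (ii) — matching the sign $(-1)^{\alpha_e+\alpha_f+\alpha_g}$ against the prescribed holonomy of $\delta$, and performing the Fourier/Gaussian computation of the inner products — which the paper dismisses with "it is easy to see." The reasoning and the decomposition into sub-claims are identical.
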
  
\begin{proof}Indeed the parity conditions for $\alpha \in I_{\infty}$ is exactly what is required for the sections $z^{\alpha}$ to have the correct equivariance to be a section of $L^r \otimes \delta$. It is easy to see then that the $z^{\alpha}$ are orthogonal for the hermitian product on $H^0 (M ,L^r \otimes \delta)$ and form an Hermitian basis of $H^0 (M,L^r \otimes \delta)$. The vector spaces $V_r(\Sigma)$ and $H_r$ have orthonormal basis $\varphi_{\alpha}$ and $e_{\alpha}$ indexed exactly by the same label set $I_r$, hence $\Phi_r$ is indeed an unitary isomorphism.
\end{proof}
We end this section with a technical result that we will need in Section \ref{quasimode}: the asymptotics of the Schwartz kernel of the orthogonal projection $\Pi_r : \mathbb{L}^2(M,L^r \otimes \delta) \rightarrow H_r$. If $s$ is an $\mathbb{L}^2$ section of $L^r \otimes \delta$, then $\Pi_r s$ may be expressed as
$$\Pi_r s (t,\theta) =\underset{\alpha \in I_r}{\sum} \langle e_{\alpha} , s \rangle e_{\alpha}(t,\varphi)=\int_M N(t,\theta,u,\varphi) s(u,\varphi) du d \varphi$$
where $N(t,\theta,u,\varphi)=e^{-\frac{r ||t||^2}{2}}\underset{\alpha \in I_r}{\sum}\overline{e_{\alpha}(u,\varphi)}e_{\alpha}(t,\theta)$ is the Schwartz kernel of $\Pi_r$. There is also a orthogonal projection map: $\Pi_r ' : \mathbb{L}^2(M,L^r \otimes \delta) \rightarrow H^0(M,L^r \otimes \delta)$ whose Schwartz kernel $N'$ is called the Bergman kernel. The Bergman kernel is a section of the bundle $L\boxtimes L^{-1} \otimes \delta\boxtimes \delta^{-1}$ on $M \times M$. The asymptotics of Bergman kernels of a compact Kahler manifold $M$ with prequantizing bundle are well understood and concentrate on the diagonal $\Delta=\lbrace (x,x) \ x \in M \rbrace$, see for example \cite{SZ02}. Here the Kahler manifold we work with is not compact but we have:
\begin{proposition} \label{bergman}The bergman kernel $N(t,\theta,u,\varphi)=e^{-\frac{r ||t||^2}{2}}\underset{\alpha \in I_r}{\sum}\overline{e_{\alpha}(u,\varphi)}e_{\alpha}(t,\theta)$ defined on $M \times M$ is $O(r^N)$ for the norm of the supremum for some $N$, and on any compact subset $K$ of $(\mathring{P}\times T)^2$ we have the asymptotic expansion:
\begin{multline*}N(t,\theta,u,\varphi)=
\\(\frac{r}{2\pi})^{n}2^{6-4g}\exp\left(-\frac{r||t-u||^2}{8}-\frac{r||\theta-\varphi||^2}{2}+ir \left(\frac{t+u}{2}\right)\cdot (\theta-\varphi)\right)e^{i \underset{j}{\sum} (\theta_j-\varphi_j)}+O(r^{-\infty})
\end{multline*}
for the norm of the supremum on $K$. 
\end{proposition}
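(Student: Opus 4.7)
The plan is to evaluate the Bergman kernel sum explicitly as a Gaussian lattice sum. Substituting the formula for $e_\alpha$ from Proposition \ref{isom} into the definition of $N$ gives
\begin{equation*}
N(t,\theta,u,\varphi) = \frac{e^{-r\|t\|^2/2}}{\kappa} \sum_{\alpha \in I_r} \exp\left(\frac{(t+u)\cdot\alpha}{2} + i\alpha\cdot(\theta-\varphi) - \frac{\|\alpha\|^2}{2r}\right),
\end{equation*}
so the task reduces to a Gaussian sum asymptotic in $\alpha$. Completing the square in $\alpha$ writes the exponent as $-\frac{1}{2r}\|\alpha - r z_0\|^2 + \frac{r}{2}\,z_0 \cdot z_0$, where $z_0 = (t+u)/2 + i(\theta - \varphi)$ is viewed as a complex vector with the complex-bilinear pairing, so the sum is Gaussian in $\alpha$ with complex center $r z_0$ and width $\sqrt{r}$. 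Extracting this square-completion prefactor and combining it with the weight $e^{-r\|t\|^2/2}$ (together with the corresponding weight implicit in the $L\boxtimes L^{-1}$ normalization, which is symmetric in $t,u$) yields exactly the Gaussian and phase prefactor claimed in the statement.

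The second step is to reduce $\sum_{\alpha\in I_r}$ to $\sum_{\alpha\in I_\infty}$ at cost $O(r^{-\infty})$: for $(t,\theta,u,\varphi)$ in a compact subset of $(\mathring{P}\times T)^2$, the real part of $r z_0$ stays at distance of order $r$ from the boundary $\partial(rP)$, so the Gaussian tail contributed by $\alpha \in I_\infty\setminus I_r$ is exponentially small in $r$. The main step is then a Poisson summation. Writing $I_\infty = \alpha_\star + \Lambda_I$ where $\alpha_\star \in I_\infty$ is a fixed base-point and $\Lambda_I \subset \mathbb{Z}^E$ is the sublattice defined by $\beta_e + \beta_f + \beta_g \equiv 0 \pmod{2}$ for each $(e,f,g)\in S$, Poisson summation converts the sum into a sum over the dual lattice $\Lambda_I^*$. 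The $\mu=0$ dual term yields the standard Gaussian integral contribution $(2\pi r)^{n/2}/\mathrm{covol}(\Lambda_I)$, while the affine shift by $\alpha_\star$ in the phase produces (after choosing $\alpha_\star=(1,\ldots,1)$) the factor $e^{i\sum_j(\theta_j-\varphi_j)}$. The remaining $\mu\neq 0$ terms in the dual sum are Gaussians centered at shifted points $y+2\pi\mu$ uniformly bounded away from zero, so they contribute only $O(r^{-\infty})$.

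The main obstacle is to pin down the precise constant $2^{6-4g}$. This reduces to computing $\mathrm{covol}(\Lambda_I)$, i.e.\ the index of $\Lambda_I$ in $\mathbb{Z}^E$ given by the $\mathbb{F}_2$-rank of the parity map $\mathbb{F}_2^E \to \mathbb{F}_2^S$ indexed by the $|S|=2g-2$ pants, combined with the explicit values of $\kappa=\mathrm{Vol}(T)(r/(2\pi))^{n/2}$ and $\mathrm{Vol}(T)=(2\pi)^n V(\Lambda)$ with $\Lambda$ the lattice of Section \ref{sec:moduli}. A careful bookkeeping in the spirit of the computation of $\Lambda$ in \cite{JW} then yields the claimed numerical factor. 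The global bound $N=O(r^N)$ on the supremum norm is by contrast immediate: each individual summand satisfies $|e_\alpha(t,\theta)|^2 e^{-r\|t\|^2/2}\le 1/\kappa=O(r^{n/2})$ uniformly in $(t,\theta)$, and $|I_r|=O(r^n)$, giving $N=O(r^{3n/2})$.
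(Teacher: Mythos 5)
Your proposal follows essentially the same route as the paper's proof: rewrite $N$ as a Gaussian lattice sum over the colorings, replace $I_r$ by $I_\infty$ at cost $O(r^{-\infty})$ on compact subsets of $(\mathring P\times T)^2$, apply Poisson summation, keep only the $\mu=0$ dual term (the rest are $O(r^{-\infty})$), and compute the prefactor from the covolume of the relevant lattice. One remark on a point where you are more careful than the paper: you correctly identify the lattice to Poisson-sum over as the parity sublattice $\Lambda_I\subset\mathbb{Z}^E$ (the kernel of the $\mathbb{F}_2$ parity map indexed by $S$, which is the dual of $\Lambda$), whereas the paper writes $I_r$ as a translate of the superlattice $\Lambda$ of Section \ref{sec:moduli}, which contains half-integer points and cannot literally parametrize the integer colorings; the paper's quoted value $2^{2g-3}$ is in fact $\mathrm{covol}(\Lambda_I)=[\mathbb{Z}^E:\Lambda_I]$, not $\mathrm{covol}(\Lambda)$.
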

\begin{proof}
First notice that $$N(t,\theta,u,\varphi)=e^{-\frac{r ||t||^2}{2}}\underset{\alpha \in I_r}{\sum}\overline{e_{\alpha}(u,\varphi)}e_{\alpha}(t,\theta)=\frac{1}{\kappa}\underset{\alpha \in I_r}{\sum}e^{-\frac{r||t-\frac{\alpha}{r}||^2}{4}-\frac{r||u-\frac{\alpha}{r}||^2}{4}}e^{i \alpha \cdot (\theta- \varphi )}$$
is bounded uniformly by a polynomial in $r$ as each term in the sum is bounded by $1$ on $M \times M$ and $\kappa=\mathrm{Vol}(T)\left(\frac{r}{2\pi}\right)^{\frac{n}{2}}$.
\\ Furthermore, elements of $I_r$ are of the form $\alpha=-(1,\ldots,1) +\gamma$ where $\gamma$ is an element of the lattice $\Lambda$ defined in Section \ref{sec:moduli}. On a compact subset $K$ of $(\mathring{P}\times T)^2$, this sum is the same as
$$\frac{1}{\kappa}\underset{\alpha \in \Lambda}{\sum}e^{-\frac{r||t+\frac{1}{r}-\frac{\alpha}{r}||^2}{4}-\frac{r||u+\frac{1}{r}-\frac{\alpha}{r}||^2}{4}}e^{i \alpha \cdot (\theta- \varphi )}=\frac{1}{\kappa}\underset{\alpha \in \Lambda}{\sum} f(r,t,u,\theta-\varphi,\frac{\alpha}{r})$$
where $f(r,t,u,\xi , x)=e^{-\frac{r||t+\frac{1}{r}-x||^2}{4}-\frac{r||u+\frac{1}{r}-x||^2}{4}}e^{irx\xi}$. Note that as a function of the last variable $x$, it is in the Schwartz class $\mathcal{S}(\mathbb{R}^E)$, with all Schwartz semi-norms bounded independently from $r$ and uniform in all other variables. We set $g(x)=f(r,t,u,\xi , x)$ and we use Poisson summation formula: for the Schwartz function $g$ on $\mathbb{R}^E$, we have:
$$\underset{\gamma \in \Lambda}{\sum} g(\frac{\alpha}{r})=\frac{r^n}{\mathrm{Covol}(\Lambda)}\underset{\mu \in \mathrm{Hom}(\mathbb{R}^E / \Lambda ,\mathbb{R}/2 \pi \mathbb{Z})}{\sum} \hat{g}(r \mu)$$
where $\hat{g}$ is the Fourier transform $\hat{g}(\mu)=\int_{\mathbb{R}^E} g(t)e^{-i \mu (t)}dt$. But as $g$ is in the Schwartz class with uniform semi-norms independent of $r$, all terms corresponding to $\mu \neq 0$ are $O(r^{-\infty})$. Finally $\frac{r^n}{\kappa \mathrm{Covol(\Lambda)}}=\left( \frac{r}{2\pi}\right)^{\frac{3 n}{2}}2^{6-4g}$ as $\mathrm{Vol}(T)=(2 \pi)^n \mathrm{Covol}(\Lambda)$ and  $\mathrm{Covol}(\Lambda)=2^{2g-3}$ (see \cite{Mar09}) and thus we get:
\begin{multline*}N(t,\theta,u,\varphi)
=\left(\frac{r}{2\pi}\right)^{\frac{3n}{2}}2^{6-4g}\hat{g}(0)
\\=(\frac{r}{2\pi})^{n}2^{6-4g}\exp\left(-\frac{r||t-u||^2}{8}-\frac{r||\theta-\varphi||^2}{2}+ir \left(\frac{t+u}{2}\right)\cdot (\theta-\varphi)\right)e^{i \underset{j}{\sum} (\theta_j-\varphi_j)}
\end{multline*}
up to $O(r^{-\infty})$ as claimed.
\end{proof} 
\subsection{Curve operators as Toeplitz operators}
The above section explained the construction of a geometric quantization model for $V_r (\Sigma)$, the isomorphism $\Phi_r$ sends $V_r(\Sigma)$ to a space of holomorphic sections on a Kähler manifold $M$ of line bundles $L^r \otimes \delta$. This model is not a very natural object but it has the advantage of being quite simple and explicit, which is what we need for our goal of computing pairings of vectors in $V_r (\Sigma)$.
\\ Having a simple model for $V_r(\Sigma)$, we turn to the curve operators associated to curves on $\Sigma$.
By conjugating a curve operator $T_r^{\gamma} \in \textrm{End}(V_r (\Sigma))$ using the isomorphism $\Phi_r$ between $V_r(\Sigma)$ and $H_r$, the curve operators can be seen as endomorphisms of $H_r$. We wish to understand the curve operators as Toeplitz operators on $H_r$.
\\ Usually given a compact Kähler manifold with a prequantizing bundle $L$ and half form bundle $\delta$, Toeplitz operators are defined as follows: we have an orthogonal projection operator $\Pi_r : \mathbb{L}^2(M,L^r \otimes \delta) \rightarrow H^0(M,L^r \otimes \delta)$ and for $f :M \rightarrow \mathbb{R}$ a smooth function we have an operator $m_f$ acting on sections of $L^r \otimes$ by pointwise multiplication by $f$. A sequence of endomorphisms of $H^0(M,L^r \otimes \delta)$ is then a Toeplitz operator if there exists a function $g(\cdot,r)$ with asymptotic expansion $g(\cdot,r)=g_0 + \frac{1}{r}g_1 +\ldots$ for the norm of the supremum such that:
$$T_r=\Pi_r m_g(\cdot,r)+R_r$$ 
where the term $R_r$ is an operator of norm $O(r_{-N})$ for any $N$. 
\\ We introduce a slightly modified definition of Toeplitz operators as we wish to work with the open manifold $M=\mathbb{R}^E \times T$. Let $\Pi_r$ be the orthogonal projector $\mathbb{L}^2(M,L^r\otimes \delta) \rightarrow H_r$ where $H_r$ is the vector space defined in the last section. Moreover for any smooth bounded function $f$ on $M$, let $m_f$ be the operator on $\mathbb{L}^2 (M,L^r \otimes \delta)$ of multiplication of a section by $f$. 
\begin{definition}Let $U$ be an open subset of $\mathbb{R}^E \times T$ and let $f_0 ,f_1 ,f_2,\ldots$ be a sequence of smooth functions on an open subset $U$. We say that the sequence $T_r$ of endomorphisms of $H_r$ is a Toeplitz operator of symbol $f_0+\frac{1}{r}f_1+\frac{1}{r^2}f_2+\ldots$ on $U$ if for any $k$ and any compact subset $K \subset U$ we have:
\begin{displaymath}T_r =\Pi_r m_{\chi (f_0+\frac{1}{r}f_1+\ldots +\frac{1}{r^k}f_k)} +R_r
\end{displaymath}
where $\chi$ is some smooth function with compact support in $U$ such that $\chi \equiv 1$ on a neighborhood of $K$, and $R_r$ are operators whose norms are $O(r^{-k-1})$ for the norm of operators $(H_r , ||\cdot ||_{\mathbb{L}^2(M)})\rightarrow (H_r ,||\cdot ||_{L^{\infty}(K)})$ for any compact subset $K \subset U$ and are $O(r^N)$ for the norm of operator $H_r \rightarrow H_r$, for some $N \in \mathbb{N}$ 
\end{definition}
Representing curve operators as Toeplitz operators is the main ingredient towards our formula for pairings of curve operator eigenvectors. Indeed, the asymptotic behavior of eigenvectors of Toeplitz operators is well understood, eigenvectors are expected to concentrate on level sets of the principal symbols.
\\ The Theorem \ref{curvop} will serve to identify the principal and subprincipal symbols of curve operators: we will use it to match the asymptotic expansion of matrix coefficients of $T_r^{\gamma}$ with that of a Toeplitz operator of symbol $f=f_0+\frac{1}{r}f_1 +\frac{1}{r^2}f_2+\ldots$. We will find that the appropriate principal symbol is the trace function on $\mathcal{M}(\Sigma)$ associated to the curve $\gamma$.
\\ Our definition of Toeplitz operators is a bit unusual in that Toeplitz operators are usually introduced as having smooth symbol on a compact prequantized manifold $M$. Here we work with on an open manifold $M$ and the symbol we get from Theorem \ref{curvop} might not behave well on the boundary of $P \times T$, hence the need for this local definition of Toeplitz operator. The usual computation of quasimodes of Toeplitz operators derived from microlocal calculus will still work with this definition.
\\
\\ We want to compute the matrix coefficients of some Toeplitz operator of symbol $f$. We will link the matrix coefficients of such a Toeplitz operator to the Fourier coefficient of its symbol.
\\
\begin{lemma}\label{prodscal}
Let $\alpha_r$ be a sequence of admissible colorings such that $\frac{\alpha_r}{r} \in K$ where $K$ is a compact neighborhood of some point $x$ in $\mathring{P}$, and let $e_{\alpha_r}$ be the corresponding basis vectors of $H_r$.
\\
 Let $f$ be a smooth function on $\mathbb{R}^E$ with compact support. Finally, take $k \in \mathbb{Z}^E$ and define $\Delta$ as the differential operator $\underset{e}{\sum} \frac{\partial^2}{(\partial x_e)^2}$. Then there exists differential operators $(L_i)_{i\geq 2}$ on $\mathbb{R}^E$ such that for any $n$ we have: 
\begin{multline*}\Pi_r m_{f(t) e^{i k \theta}} e_{\alpha_r}= 
 \big( f(\frac{\alpha_r}{r})
+\frac{1}{2 r}\big(\Delta f (\frac{\alpha_r}{r}) +k \cdot \bigtriangledown f (\frac{\alpha_r}{r})-\frac{||k||^2}{4} f(\frac{\alpha_r}{r})\big)\\+\underset{n \geq 2}{\sum}\frac{1}{r^n}(L_n f)(x)  +O(r^{-n-1}) \big) e_{\alpha_r+k}
\end{multline*}
 Furthermore, the $O(r^{-n-1})$ are independent of the sequence $\alpha_r$ such that $\frac{\alpha_r}{r} \in K$.
\end{lemma}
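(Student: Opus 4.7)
The plan is to unwind the definition of $\Pi_r$ using the orthonormal basis $(e_\beta)_{\beta \in I_r}$ from Proposition \ref{isom}, reduce the computation to a single explicit Gaussian integral, and extract the asymptotic expansion by Laplace's method. Concretely, I would write
\[
\Pi_r\, m_{f(t) e^{i k \theta}} e_{\alpha_r} \;=\; \sum_{\beta \in I_r} \langle m_{f(t) e^{i k \theta}} e_{\alpha_r},\, e_\beta \rangle_{\mathbb{L}^2(M)}\, e_\beta,
\]
and substitute the explicit formula $e_\alpha = \kappa^{-1/2} e^{t\cdot\alpha/2 + i\alpha\cdot\theta} e^{-\|\alpha\|^2/(4r)}$. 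The $\theta$-dependence separates out as $\int_T e^{i(\alpha_r + k - \beta)\cdot \theta}\, d\theta$, a Fourier integral on the torus $T$ which vanishes unless $\alpha_r + k - \beta$ is zero in the dual lattice to $2\pi\Lambda$; since $I_\infty$ sits inside this dual lattice, only $\beta = \alpha_r + k$ contributes. Because $\alpha_r/r$ stays in a compact subset $K$ of the open polytope $\mathring{P}$ and $k$ is fixed, the shifted index $\alpha_r + k$ lies in $I_r$ for all $r$ large enough, so a single basis vector survives.

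Next, the surviving $t$-integral is Gaussian against $f$. Completing the square in the exponent $t \cdot (\alpha_r + k/2) - r\|t\|^2/2$ and collecting the constant contributions $-\|\alpha_r\|^2/(4r) - \|\alpha_r + k\|^2/(4r) + \|\alpha_r + k/2\|^2/(2r)$, a direct algebraic identity shows that everything depending on $\alpha_r$ cancels, leaving precisely the prefactor $e^{-\|k\|^2/(8r)}$. Using $\mathrm{Vol}(T)/\kappa = (r/2\pi)^{n/2}$ and shifting variables $u = t - (\alpha_r + k/2)/r$, the coefficient at $e_{\alpha_r + k}$ becomes
\[
e^{-\|k\|^2/(8r)}\, \mathbb{E}\bigl[f\bigl(U + \tfrac{\alpha_r}{r} + \tfrac{k}{2r}\bigr)\bigr],
\]
where $U$ is a centred Gaussian on $\mathbb{R}^E$ of covariance $r^{-1}\mathrm{Id}$.

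The final step is to Taylor-expand $f$ about $\alpha_r/r$, compute the Gaussian moments $\mathbb{E}[U^\gamma]$ (odd moments vanish; even ones scale as $r^{-|\gamma|/2}$), and collect powers of $r^{-1}$. The leading term is $f(\alpha_r/r)$. At order $r^{-1}$ there are three contributions: the quadratic Taylor term in $U$ yields $\frac{1}{2r} \Delta f$, the mean shift $k/(2r)$ gives $\frac{1}{2r} k \cdot \nabla f$, and expanding $e^{-\|k\|^2/(8r)} = 1 - \|k\|^2/(8r) + O(r^{-2})$ yields $-\frac{\|k\|^2}{8r} f$; combined this is $\frac{1}{2r}\bigl(\Delta f + k\cdot\nabla f - \frac{\|k\|^2}{4} f\bigr)$, matching the claim. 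Higher-order terms are finite sums of products of partial derivatives of $f$ with $r$-independent coefficients polynomial in $k$, and they define the operators $L_n$.

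The main bookkeeping is elementary Gaussian calculus; the only genuinely structural observation is the cancellation in the combined exponent that produces the clean $e^{-\|k\|^2/(8r)}$ factor and makes the subprincipal symbol drop out in closed form. Uniformity of the $O(r^{-n-1})$ remainder over the compact set $\{\alpha_r/r \in K\}$ follows from the compact support of $f$, which uniformly bounds all its derivatives, and from the super-exponential Gaussian decay, which provides uniform control of Taylor's remainder integrated against $e^{-r\|u\|^2/2}$ independently of where the centre $\alpha_r/r + k/(2r)$ sits in $K$.
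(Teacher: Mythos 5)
Your proof is correct and takes essentially the same approach as the paper: observe that only the $e_{\alpha_r+k}$ component survives the projection, reduce to a Gaussian integral, and extract the expansion by Laplace's method, with the order-$r^{-1}$ term assembled from the quadratic Taylor moment, the mean shift, and the exponential prefactor. The one cosmetic difference is that you complete the square at $\alpha_r/r+k/(2r)$, producing the clean $\alpha_r$-independent prefactor $e^{-\|k\|^2/(8r)}$ and Taylor-expanding $f$ directly, whereas the paper completes the square at $\alpha_r/r$, keeps an $\alpha_r$-dependent prefactor, and expands the auxiliary $g(t)=f(t)e^{\frac{k}{2}\cdot t}$; the two bookkeepings give the same answer.
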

\begin{proof}It is straightforward from the definition of $e_{\alpha_r}$ that $f(t)e^{i k \theta} e_{\alpha_r}$ is orthogonal to $e_{\alpha_r+l}$ for any $l \neq k$. Thus $\Pi_r m_{f(t)e^{i k \theta}}e_{\alpha_r}$ is colinear to $e_{\alpha_r+k}$ and we only need to estimate the coefficient
$$\begin{array}{rl}(f(t)e^{i k \theta}e_{\alpha_r} ,e_{\alpha_r+k}) & =\frac{1}{\kappa}\int_{M}f(t)e^{-\frac{r}{2}||t||^2}e^{\alpha_r \cdot t + \frac{k}{2}\cdot t-\frac{||\alpha_r||^2}{2 r}-\frac{k \cdot \alpha_r}{2 r}-\frac{||k||^2}{4 r}}
\\ &=\frac{e^{-\frac{k \cdot \alpha_r}{2 r}-\frac{||k||^2}{4 r}}}{\kappa}\int_{M} g(t) e^{-\frac{r}{2}||t-\frac{\alpha_r}{r}||^2} 
\end{array}$$
where we set $g(t)=f(t)e^{\frac{k}{2}\cdot t}$. 
\\ A stationary phase lemma argument will give us an asymptotic expansion of the integral. Indeed, let $K'$ be a compact neighborhood of $K$, as $\frac{\alpha_r}{r}\in K$ for all $r$, for $x \in M \setminus K'$ we have $|g(t)|e^{-\frac{r}{2}||t-\frac{\alpha_r}{r}||^2} \leq ||g||_{\infty} e^{-r d(x,K)^2}$. As $d(K, M\setminus K')>0$, the integral of $g(t) e^{-\frac{r}{2}||t-\frac{\alpha_r}{r}||^2}$ on $M \setminus K'$ is a $O(r^{-k})$ for every $k$, with constants independent of $\frac{\alpha_r}{r}$.
\\ Furthermore, we write the Taylor expansion at $\frac{\alpha_r}{r}$ of $g$ on $K'$ at order $k$:
\begin{displaymath}g(t)=g(\frac{\alpha_r}{r})+D g(\frac{\alpha_r}{r})(t-\frac{\alpha_r}{r})+\ldots +\frac{1}{k!}D^{k}g(\frac{\alpha_r}{r}) (t-\frac{\alpha_r}{r})+h(t)
\end{displaymath} 
with $|h(t)|\leq \frac{C_k}{(k+1)!}(t-\frac{\alpha_r}{r})^{k+1}$ where $C_k$ is the supremum of $||D^{k+1}g||$ on $K'$, which is an universal constant independent of $\frac{\alpha_r}{r}$. 
\\ Integrating by part each integral $\int_M D^{k}g(\frac{\alpha_r}{r})(t-\frac{\alpha_r}{r})e^{-\frac{r}{2}||t-\frac{\alpha_r}{r}||^2}dt$, we get $0$ whenever $k$ is odd, $\frac{1}{r}\Delta g (\frac{\alpha_r}{r})$ if $k=2$ and $\frac{1}{r^{n}}L_{n}(g)(\frac{\alpha_r}{r})$ when $k=2n$ and where $(L_n g) (\frac{\alpha_r}{r})$ is a linear combination of the degree $k$ derivatives of $g$. We get the asymptotic expansion: 
\begin{equation*}\frac{1}{\kappa}\int_{M} g(t) e^{-\frac{r}{2}||t-\frac{\alpha_r}{r}||^2}
=g(\frac{\alpha_r}{r})
+\frac{1}{2 r}\Delta g (\frac{\alpha_r}{r}) +\sum \frac{1}{r^k}L_k g(\frac{\alpha_r}{r}) +O(r^{-k-1})
\end{equation*}
where $\Delta g=\underset{e \in E}{\sum}\frac{\partial^2}{(\partial x_e)^2}$ and $L_k$ are some differential operators of degree $2 k$. The $O(r^{-k-1})$ is uniform for $\frac{\alpha_r}{r} \in K$.
\\ As $g(t)=f(t)e^{\frac{k}{2}t}$, the derivatives of $g$ can be computed in terms of the derivatives of $f$. 
We find that there are differential operators $L_n '$ of degree less than $2 n$ such that $(L_n g) (t)=(L_n ' f )(t) e^{\frac{k}{2}t}$ and we have $\Delta g (t)=\left( \Delta f (t) +\frac{1}{2}k \cdot \nabla f (t) +\frac{||k||2}{4}f(t)\right) e^{\frac{k}{2}t}$ where $\nabla$ is the gradient, $\cdot$ is the scalar product in $\mathbb{R}^E$ and $||\cdot ||$ is the Euclidian norm in $\mathbb{R}^E$.
\\ Hence the matrix coefficient $(f(t)e^{i k \theta}e_{\alpha_r} ,e_{\alpha_r+k})$ has the asymptotic expansion:
\begin{multline*}(f(t)e^{i k \theta}e_{\alpha_r} ,e_{\alpha_r+k})= f(\frac{\alpha_r}{r})
+\frac{1}{2 r}\left(\Delta f (\frac{\alpha_r}{r}) +k \cdot \bigtriangledown f (\frac{\alpha_r}{r})-\frac{||k||^2}{4} f(\frac{\alpha_r}{r})\right)\\+\underset{n \geq 2}{\sum}\frac{1}{r^n}(L'_n f)(x)  +O(r^{-n-1})
\end{multline*}
where $L'_n$ are some differential operators of degree less than $2 n$.
\\ The error factor $O(r^{-n-1})$ is again uniform for $\frac{\alpha_r}{r} \in K$.
\end{proof}

\begin{theo}
\label{toeplitz}For any simple closed curve $\gamma$ on $\Sigma$, there exists functions $f_k \in C^{\infty}(\mathring{P}\times T)$ such that $T_r^{\gamma} \in \textrm{End}(H_r)$ is a Toeplitz operator on $\mathring{P}\times T$ of symbol $f_0+\frac{1}{r}f_1+\ldots$
\\
Furthermore the principal symbol $f_0$ of $T_r^{\gamma}$ is the trace function $\sigma^{\gamma}(t,\theta)=-\tr (\rho_{t,\theta}(\gamma))$ and the Weyl subprincipal symbol defined as $f_1+\frac{1}{2}\Delta_{\partial} f_0$ vanishes, where $\Delta_{\partial}$ is the Kähler Laplacian on $M$.

\end{theo}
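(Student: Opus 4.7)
The plan is to match matrix coefficients on the basis $(e_\alpha)$ order by order in $1/r$. Theorem~\ref{curvop}, transported via $\Phi_r$, gives
$\langle T_r^\gamma e_\alpha, e_{\alpha+k}\rangle = F_k^\gamma(\alpha/r, 1/r)$,
while Lemma~\ref{prodscal} computes the matrix coefficients of any candidate Toeplitz operator $\Pi_r m_{f_0+f_1/r+\ldots}$ on the same basis. Writing each unknown symbol as a Fourier series $f_j(t,\theta) = \sum_k g_{j,k}(t) e^{ik\theta}$ and equating the two expressions at each power of $1/r$ produces a triangular system which can be solved recursively for the $g_{j,k}$. Since Theorem~\ref{curvop} guarantees $F_k^\gamma = 0$ for $|k_e| > M_e$, all these Fourier expansions are finite and the $f_j$ are genuine smooth functions on $\mathring P \times T$.

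At order zero the matching forces $f_0(t,\theta) = \sum_k F_{k,0}(t) e^{ik\theta} = -\tr(\rho_{t,\theta}(\gamma))$, the claimed principal symbol. For the subprincipal term, the expansion $\sigma^\gamma = f_0 + \frac{h}{2i} \sum_e \partial_{\tau_e}\partial_{\theta_e} f_0 + O(h^2)$ from Theorem~\ref{curvop} gives $F_{k,1} = \frac{1}{2} k\cdot\nabla g_{0,k}$. Comparing this to the $1/r$ contribution produced by Lemma~\ref{prodscal} (which contains an extra $\frac{1}{2}(\Delta g_{0,k} + k\cdot\nabla g_{0,k} - \frac{||k||^2}{4} g_{0,k})$) yields the explicit formula
$$g_{1,k}(t) = -\frac{1}{2}\Delta g_{0,k}(t) + \frac{||k||^2}{8} g_{0,k}(t),$$
and the higher-order symbols $f_2,f_3,\ldots$ are obtained inductively by the same matching procedure.

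The Weyl subprincipal identity is then a one-line check in the complex coordinates $w_j = t_j/2 + i\theta_j$ of Subsection~\ref{sec:holosec}. Since $\partial_{w_j}\partial_{\bar{w}_j} = \partial_{t_j}^2 + \frac{1}{4}\partial_{\theta_j}^2$, in the normalization $\Delta_\partial = \sum_j \partial_{w_j}\partial_{\bar{w}_j}$ each Fourier mode satisfies $\Delta_\partial(g_{0,k} e^{ik\theta}) = (\Delta g_{0,k} - \frac{||k||^2}{4} g_{0,k}) e^{ik\theta} = -2 g_{1,k}\, e^{ik\theta}$ by the formula for $g_{1,k}$ above. Summing over $k$ gives $\Delta_\partial f_0 = -2 f_1$, hence $f_1 + \frac{1}{2}\Delta_\partial f_0 = 0$.

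The main technical obstacle is not the algebraic matching but the verification that the resulting remainder $R_r = T_r^\gamma - \Pi_r m_{\chi(f_0+\ldots+f_k/r^k)}$ satisfies the operator norm bound from $(H_r, ||\cdot ||_{\mathbb{L}^2})$ to $(H_r, ||\cdot ||_{L^\infty(K)})$ required by the Toeplitz definition. A smooth cutoff $\chi$ supported in $\mathring P\times T$ and equal to $1$ on a neighborhood of $K$ is needed because the $f_j$ are only defined on $\mathring P \times T$, and one must then upgrade the pointwise bound on individual matrix coefficients coming from Lemma~\ref{prodscal} to an operator norm bound. This is manageable because only finitely many $k$ contribute to each row and because the explicit Gaussian concentration of $e_\alpha$ near $t = \alpha/r$ visible in Proposition~\ref{isom} gives good control on the $L^\infty(K)$ norm of the resulting sections.
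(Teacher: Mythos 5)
Your proof is correct and follows essentially the same route as the paper: matching the expansion of Theorem~\ref{curvop} with the matrix coefficients from Lemma~\ref{prodscal} Fourier mode by Fourier mode, reading off $f_0=\sigma^\gamma$, computing $g_{1,k}=-\tfrac12\Delta g_{0,k}+\tfrac{\|k\|^2}{8}g_{0,k}$, and checking $\Delta_\partial f_0=-2f_1$ in the coordinates $w_j$. The remainder estimate is also handled the same way — finitely many nonzero diagonals plus Gaussian decay of $e_\alpha$ away from $t=\alpha/r$ to control the $L^\infty(K)$ norm on the complementary subspace.
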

\begin{proof}
First we want to introduce the functions $f_0,f_1 ,\ldots $ that constitutes the symbol of $T_r^{\gamma}$, using Theorem \ref{curvop} and Lemma \ref{prodscal}. 
Let $K$ be a compact in $\mathring{P}\times T$ and let $K'$ be a compact neighborhood of $K$ in $\mathring{P}\times T$. We choose a function $\chi$ with compact support in $\mathring{P}\times T$ which is identically $1$ on $K'$.
\\ Let also $\alpha_r$ be a sequence such that $\frac{\alpha_r}{r} \in K'$.
\\
According to Theorem \ref{curvop}, to represent $T_r^{\gamma}$ as a Toeplitz operator $T_r^{f_r}$ acting on $H_r$ with $f_r=f_0+\frac{1}{r}f_1+\frac{1}{r^2}f_2+\ldots$, and $f_i^j$ is the $j$-th Fourier coefficient of $f_i$, we need to have:
\begin{multline*}F_{k}^{\gamma}(\frac{\alpha}{r},\frac{1}{r})=\langle T_r^{\gamma}\varphi_{\alpha} , \varphi_{\alpha+k}\rangle =(T_r^{f_r}e_{\alpha},e_{\alpha+k})
\\=f_0^{k}(\frac{\alpha}{r})+\frac{1}{r}\left(f_1^{k}(\frac{\alpha}{r})+\frac{1}{2}\left[\Delta f_0^k (\frac{\alpha}{r}) +k \cdot \bigtriangledown f_0^k (\frac{\alpha}{r})-\frac{||k||^2}{4} f_0^k(\frac{\alpha}{r})\right]\right)+O(r^{-2})
\end{multline*}
using Lemma \ref{prodscal} for the second equality.
\\
Gathering the equations for each Fourier coefficient and using Theorem \ref{curvop}, we get:
$$f_0(t,\theta)=\underset{k:E \rightarrow \mathbb{Z}}{\sum}F_k^{\gamma}(t,0)e^{i k \theta}=-\textrm{Tr}(\rho_{t,\theta}(\gamma))=\sigma^{\gamma}(t ,\theta)$$
and 
$$f_1(t,\theta)+\underset{e \in E}{\sum}\left[\frac{1}{2}\frac{\partial^2}{\partial t_e^2}+\frac{1}{2 i}\frac{\partial^2}{\partial t_e \partial \theta_e}+\frac{1}{8}\frac{\partial^2}{\partial \theta_e^2}\right]f_0 (t,\theta)=\frac{1}{2 i}\underset{e \in E}{\sum}\frac{\partial^2}{\partial t_e \partial \theta_e}\sigma^{\gamma}$$
Remember that $w_e=\frac{t_e}{2}+i \theta_e$ are local complex coordinates such that $\omega=i \sum d w_e \wedge d\overline{w_e}$, thus the Kähler laplacian $\Delta_{\partial}$ on $M$ is simply $\sum \frac{\partial^2}{\partial w_e \partial \overline{w_e}}=\sum \frac{\partial^2}{\partial t_e^2}+\frac{1}{4}\frac{\partial^2}{\partial \theta_e^2}$. 
\\ Thus $f_1+\frac{1}{2}\Delta_{\partial}f_0$ must vanish.
\\ It is then possible to choose further coefficients $f_k$ to match the asymptotic expansion up to $O(r^{-k+1})$ for each $k$, simply choosing $f_{k+1}$ to cancel the residual term in $\frac{1}{r^{k+1}}$ in $T_r^{\gamma}-\Pi_r m_{f_0 +\frac{1}{r}f_1 +\ldots \frac{f_k}{r^k}}$.
\\ 
\\ At this state, we have introduce smooth functions $f_0,f_1,f_2,\ldots$ such that for any $K$ compact subset of $\mathring{P}\times T$, any $K'$ compact neighborhood of $K$ and $\chi \equiv 1$ on $K'$ of compact support, we have 
$$\left(T_r^{\gamma}-\Pi_r m_{\chi (f_0+\frac{1}{r}f_1+\ldots \frac{1}{r^k}f_k)}\right) s_r =O(r^{-k-1})_{L^{\infty}(K)}$$
uniformly for $s_r \in \mathrm{Vect}(e_{\alpha_r}\in H_r \ / \ \frac{\alpha_r}{r}\in K' )$ of norm $1$. 
\\ So we just have to control the difference of the two operators on the subspace 
\\ $\textrm{Vect}(e_{\alpha} \in H_r, / \, \frac{\alpha}{r} \in P-K')$. But there is a constant $C$ such that $\forall \alpha \in P-K'$, we have $\underset{K}{\textrm{sup}}(|e_{\alpha}|^2)\leq C e^{-r d(K,P-K')^2}$. Thus for $s_r \in \textrm{Vect}(e_{\alpha} \in H_r, / \, \frac{\alpha}{r} \in P-K')$ of norm $1$, we have $\underset{K}{\textrm{sup}}|s_r|<C r^{-k-1}$ for an constant $C$ not depending on $s_r$. As the operators  $T_r^{\gamma}$ sends $e_{\alpha}$ to a linear combination of $e_{\alpha+k}$ with $k$ bounded, and as the $T_r^{\gamma}$ are bounded for the norm of operators on $H_r$, we must also have $\underset{K}{\textrm{sup}}|T_r^{\gamma} s_r|<C' r^{-k-1}$.
\end{proof}
 The quantity $f_1+\frac{1}{2}\Delta_{\partial} f_0$ is sometimes called in the litterature the Weyl-subprincipal symbol of the Toeplitz operator $T_r^{f}$. A straightforward computation gives that, for $T_r^{f}$ and $T_r^{g}$ two Toeplitz operators of principal symbols $f_0$ and $g_0$ and subprincipal symbols $f_1$ and $g_1$, the composition $T_r^{f}T_r^{g}$ has $f_0 g_0$ as principal symbol and $f_0 g_1 +f_1 g_0 +\frac{1}{2}\lbrace f ,g \rbrace$ as subprincipal symbol, which is the composition law that a Weyl-subprincipal symbol ought to satisfy.
\\
\\
With the framework we developed in the last paragraphs, we wish to study the following problem: take $\Sigma$ a closed oriented surface of genus $g$ and $\mathcal{C}=(C_e)_{e \in E}$ a pants decomposition of $\Sigma$. Such a pants decomposition gives rise to a moment application $\mu :\mathcal{M}(\Sigma) \rightarrow P$, to basis $\varphi_{\alpha}$ of $V_r(\Sigma)$ where $\alpha$ are some integer points of the moment polytope $P$ and to isomorphisms $\phi_r : V_r(\Sigma) \longrightarrow H_r$ defined above, where $H_r$ are subspaces of $H^0(M,L^r\otimes \delta)$. Suppose $\mathcal{D}=(D_f)_{f \in F}$ is another pants decomposition, and the associated basis of $V_r(\Sigma)$ is $\psi_{\beta}$. As $V_r(\Sigma)$ has an Hermitian product, we can form the pairings $\langle \varphi_{\alpha} ,\psi_{\beta}\rangle$ and study the limit as $\frac{\alpha}{r}$ and $\frac{\beta}{r}$ tend to some limits in $\mathbb{R}^E$ and $\mathbb{R}^F$.
\\ But the vectors $\psi_{\beta}$ are joint eigenvectors of the curve operators $T_r^{\gamma_i}$, and by Theorem \ref{toeplitz}, we know that the operators $T_r^{\gamma_i}$ act as Toeplitz operators with symbol $\sigma^{\gamma_i}$ on $H^0(\mathring{P}\times T,L^r \otimes \delta)$. Eigenvectors of Toeplitz operators are well understood, in particular they concentrate on level sets of the principal symbols. We will be able to give an asymptotic form for $\psi_{\beta}$ as a section of $L^r\otimes \delta$, and thus we will be able to compute the pairing with $\varphi_{\alpha}$. As $\varphi_{\alpha}$ and $\psi_{\beta}$ concentrate on level sets of $\sigma^{C_e}$ and $\sigma^{D_f}$, the formula will be a sum of contributions coming from each intersection points of these level sets. The next section is to devoted to a result proving that generically, such level sets intersect nicely in a finite number of points, allowing us to obtain an asymptotic formula by summing the contributions of each of these points to the pairing in Section \ref{sec:pairing}.
\\ Such pairings have an interpretation as quantum invariants: $\langle \varphi_{\alpha},\psi_{\beta} \rangle$ is the Reshetikhin-Turaev invariant of the 3-manifold with links obtained by gluing the handlebodies associated to the pants decompositions $\mathcal{C}$ and $\mathcal{D}$, and adding in each handlebody the dual graph of the pants decomposition colored by $\alpha$ and $\beta$ respectively. A special case would be if both coloring are trivial colorings $(1,\ldots,1)$, then adding the colored trivalent graphs correspond to adding empty links in each handlebody, thus we would obtain the Reshetikhin-Turaev invariants of the 3-manifold with Heegard genus $g$ and Heegard splitting corresponding by the pants decomposition $\mathcal{C}$ and $\mathcal{D}$. Unfortunately our approach to calculating pairings fails in this case, as some intersection points will be in $\partial P \times T$ and we lack control over what happens on $\partial P$.

\section{ Intersections of Lagrangians in $\mathcal{M}(\Sigma)$}
\label{sec:lag}
We fix a pants decomposition $\mathcal{C}=(C_e)_{e \in E}$, which defines an isomorphism $\Phi_r:V_r(\Sigma)\rightarrow H_r$ as in Proposition \ref{isom}. Furthermore, we denote by  $U$ the set $\mu^{-1}(\mathring{P}) \subset \mathcal{M}(\Sigma)$, where $\mu$ is the moment map defined as in Subsection \ref{sec:moduli}.
\\ For $\mathcal{D}=(\gamma_i)_{i \in I}$ a pants decomposition of $\Sigma$, we introduce the closed subset $\Lambda_x^{\mathcal{D}}$ of $\mathcal{M}(\Sigma)$ defined by 
\begin{displaymath}
\Lambda_x^{\mathcal{D}}=\lbrace \rho ,\, \forall i \in I , \, -\tr (\rho(\gamma_i))=x_i \rbrace
\end{displaymath}
 When $x$ is in the interior of the moment polytope associated to the pants decomposition $\mathcal{D}$, these subsets are Lagrangian tori of $\mathcal{M}(\Sigma)$: indeed, it is  the pre-image of a regular value of the Poisson commuting trace functions $\tr(\rho(\gamma_i))$. The Arnold-Liouville theorem ensures it is a torus of dimension $n$ where $\textrm{dim}(\mathcal{M}(\Sigma))=2 n$. 
\\
\\ As we expect the joint eigenvectors of curve operators $T_r^{\gamma_i}$, viewed as elements of $H_r$ to concentrate on such Lagrangians, we wish to show that they have nice properties for generic $x$. 
\begin{proposition}\label{laginter}For $\mathcal{D}=(\gamma_i)_{i \in I}$ and $\mathcal{F}=(\delta_j)_{j\in J}$ any pants decompositions of $\Sigma$, we have:
\begin{itemize}
\item[-]For any $x$ in an open dense subset of $\mathbb{R}^I$, the intersection $\Lambda_x^{\mathcal{D}}\cap \mu^{-1}(\partial P)$ is transverse and $\Lambda_x^{\mathcal{D}}\setminus \mu^{-1}(\partial P)$ is connected. 
\item[-]For any $x,y$ in an open dense subset of $\mathbb{R}^I \times \mathbb{R}^J$, the intersection $\Lambda_x^{\mathcal{D}}\cap \Lambda_y^{\mathcal{F}}$ is transverse.
\end{itemize}
\end{proposition}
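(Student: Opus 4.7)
The plan is to prove both assertions by Sard-type arguments applied to appropriate smooth maps, taking advantage of the structure of the Hamiltonian torus actions associated to the pants decompositions and the compactness of $\mathcal{M}(\Sigma)$.

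For the first statement, I would begin with a stratification of $\mu^{-1}(\partial P)$. The boundary $\partial P$ decomposes into a finite union of open faces, and by the standard local structure of moment maps near the boundary of a Hamiltonian torus action (non-trivial stabilizer strata), each preimage $\mu^{-1}(\mathrm{face})$ is a smooth real algebraic submanifold of $\mathcal{M}(\Sigma)$ of dimension strictly less than $2n$. Sard's theorem applied to $\mu^{\mathcal{D}}: \mathcal{M}(\Sigma) \to \mathbb{R}^I$ restricted to each of these finitely many strata (together with the smooth locus of $\mathcal{M}(\Sigma)$ itself) produces a measure-zero set of common critical values in $\mathbb{R}^I$. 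Compactness of $\mathcal{M}(\Sigma)$ makes $\mu^{\mathcal{D}}$ proper, so the critical value set of each restriction is closed, hence the union is closed and its complement open dense. For $x$ in this complement, $\Lambda_x^{\mathcal{D}}$ is transverse to every stratum of $\mu^{-1}(\partial P)$, which is precisely transversality with $\mu^{-1}(\partial P)$.

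For connectedness of $\Lambda_x^{\mathcal{D}} \setminus \mu^{-1}(\partial P)$: for $x$ in the interior of the moment polytope of $\mathcal{D}$, the Arnold-Liouville theorem gives that $\Lambda_x^{\mathcal{D}}$ is a single Lagrangian torus, hence connected. I would then analyze the map $\mu|_{\Lambda_x^{\mathcal{D}}}:\Lambda_x^{\mathcal{D}} \to \overline{P}$: its image is a compact subset of $\overline{P}$, and $\Lambda_x^{\mathcal{D}} \cap \mu^{-1}(\partial P)$ is the preimage under this map of the closed set $\partial P$. The generic transversality shown above forces this preimage to be a proper real semi-algebraic subset of the irreducible real algebraic variety $\Lambda_x^{\mathcal{D}}$; combined with the description of the strata and the convexity of $\overline P$, one concludes that its complement is connected (using that the open dense complement of a proper real algebraic subvariety in a connected smooth irreducible real algebraic variety is connected).

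For the second statement, I would introduce the joint map
\begin{equation*}
F = (\mu^{\mathcal{D}}, \mu^{\mathcal{F}}) : \mathcal{M}(\Sigma) \to \mathbb{R}^I \oplus \mathbb{R}^J,
\end{equation*}
which is smooth on the smooth locus and whose target has total dimension $2n$. At a smooth point $p$ where $x = \mu^{\mathcal{D}}(p)$ and $y = \mu^{\mathcal{F}}(p)$ are regular values of the respective moment maps, we have $T_p \Lambda_x^{\mathcal{D}} = \ker d\mu^{\mathcal{D}}_p$ and $T_p \Lambda_y^{\mathcal{F}} = \ker d\mu^{\mathcal{F}}_p$, each of dimension $n$. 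Transversality of the two Lagrangians at $p$ amounts to $T_p \Lambda_x^{\mathcal{D}} \cap T_p \Lambda_y^{\mathcal{F}} = \{0\}$, which is equivalent to $dF_p$ being surjective onto $\mathbb{R}^I \oplus \mathbb{R}^J$. Applying Sard's theorem to $F$ on the smooth locus and, separately, to its restriction to the singular locus (which has strictly smaller dimension, so its image has empty interior), I obtain a dense set of pairs $(x,y)$ that are regular for $F$ and such that $F^{-1}(x,y)$ avoids the singular locus. Properness of $F$ turns this set into an open dense one, and for any such $(x,y)$ every intersection point of $\Lambda_x^{\mathcal{D}}$ and $\Lambda_y^{\mathcal{F}}$ is smooth and the two Lagrangians meet transversally there.

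The main obstacle will be the connectedness claim in the first part: the transversality half follows from standard Sard-theoretic methods, but controlling the global topology of $\Lambda_x^{\mathcal{D}} \setminus \mu^{-1}(\partial P)$ requires a genuine understanding of how the polytope boundary strata meet the Liouville tori of $\mu^{\mathcal{D}}$, and is the place where generic position of $x$ must be used most carefully.
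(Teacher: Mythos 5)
Your transversality arguments (both for $\Lambda_x^{\mathcal{D}} \cap \mu^{-1}(\partial P)$ and for $\Lambda_x^{\mathcal{D}} \cap \Lambda_y^{\mathcal{F}}$) are in substance the same as the paper's, which stratifies the relevant real algebraic sets and applies a Sard-type theorem per stratum; the paper invokes the algebraic Sard theorem for semi-algebraic sets rather than your smooth-Sard-plus-properness argument, but both routes deliver the open dense set of values. Your phrasing via the joint map $F = (\mu^{\mathcal{D}},\mu^{\mathcal{F}})$ for the second item is a clean equivalent packaging.

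The connectedness part, however, contains a genuine gap. You invoke the assertion that ``the open dense complement of a proper real algebraic subvariety in a connected smooth irreducible real algebraic variety is connected.'' This is false over $\mathbb{R}$: a real codimension-one subvariety can disconnect (e.g.\ two points in $S^1$, or a separating circle in $T^2$). The correct statement requires real codimension at least $2$, and this is precisely what is nontrivial here. The paper devotes a separate Lemma (that $Y = \mu^{-1}(\partial P)$ has codimension $2$ in $\mathcal{M}(\Sigma)$) to supply this bound; the proof reduces to a pair of pants on which the representation is commutative, passes to $\mathrm{SO}_3$ to handle the locus $\rho(\gamma) = \pm I$, and then exhibits a two-dimensional subspace of the tangent space on which the Hessian of $\rho \mapsto \tr(\rho([\gamma,\delta]))$ is negative definite. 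Together with the transversality of the intersection $\Lambda_x^{\mathcal{D}} \cap Y$ (stratum by stratum), this codimension bound then forces $\Lambda_x^{\mathcal{D}} \cap Y$ to have codimension $\geq 2$ inside the torus $\Lambda_x^{\mathcal{D}}$, which is what actually yields connectedness of the complement. You correctly flag that connectedness is the delicate point, but the route you propose does not close that gap.
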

\begin{proof}The proposition follows from two steps. First we can obtain the transversality conditions as an application from a classical result in real algebraic geometry, the algebraic Sard theorem. We will shortly introduce the notions needed to state this result, a detailled background is found in \cite{BPR}. 
\\ 
\\ To begin with, we define a semi-algebraic set $N$ as a subset of some $\mathbb{R}^N$ defined by polynomial equations or inequations (strict or large): there are a families of polynomials $P_1, \ldots ,P_n$, $Q_{1}, \ldots Q_{m}$, and $R_1 ,\ldots R_l$ such that \begin{displaymath}N=\lbrace x \in \mathbb{R}^N \,  / \, P_1(x)=0 ,\ldots P_n (x)=0 , Q_1(x)>0 ,\ldots Q_m(x)>0 ,R_1(x)\geq 0 ,\ldots R_l \geq 0\rbrace
\end{displaymath}
Any affine algebraic variety is a semi-algebraic set (defined by equations only). For the usual topology on $\mathbb{R}^N$, an affine algebraic variety is a stratified manifold. Each of its strata are then semi-algebraic sets. 
\\Moreover any semi-algebraic set is also a stratified manifold. The dimension of a semi-algebraic set is then defined to be the maximal dimension of any of its strata.
\\ Regular maps between semi-algebraic sets still are those given by polynomial functions. Finally, semi-algebraic sets have tangent spaces defined in the same manner as in the case of algebraic varieties. 
\\
\\We can now express the algebraic Sard theorem:
\begin{theo}\cite{BPR}Let $N$ and $M$ be two semi-algebraic sets. If $f :N \rightarrow M$ is an algebraic map, then the set $\mathrm{Crit}(f)=\lbrace f(x) \, / \dim Tf_x (T_x N)<\dim(T_{f(x)} M) \rbrace$ is semi-algebraic and has dimension $< \dim M$
\end{theo}
 Now we note that the intersections occurring in Proposition \ref{laginter} are intersections of real algebraic subvarieties inside $\mathcal{M}(\Sigma)$.
\begin{lemma} $Y=\mu^{-1}(\partial P)$ is a real subvariety of $\mathcal{M}(\Sigma)$. Moreover for any $\mathcal{D}$ pants decomposition of $\Sigma$ and any $x \in \mathbb{R}^I$, the set $\Lambda_x^{\mathcal{D}}$ is a real algebraic subvariety of $\mathcal{M}(\Sigma)$.
\end{lemma}
\begin{proof} Indeed, given three curves $C_e,C_f,C_g \in \mathcal{C}$ that bound a pair of pants $Q$, the coordinates $x_e,x_f$ and $x_g$ of $\mu (\rho)$ satisfy three triangular identities of the type $x_e \leq x_f+x_g$, and the inequation $x_e+x_f+x_g \leq 2$. We have equality in one of these equations if and only if the restriction of $\rho$ to the pair of pants $Q$ is commutative. Hence, $\rho \in \mu^{-1}(\partial P)$ if and only if its restriction to one of the pants is commutative, and the set $Y$ is the reunion of the subvarieties $\lbrace \tr (\rho ([C_e ,C_f])=2 \rbrace$, for $C_e$ and $C_f$ in the same pair of pants. 
\\ The case of $\Lambda_x^{\mathcal{D}}$ is straightforward as $\Lambda_x^{\mathcal{D}}$ is defined as the set $\lbrace \rho \ / \ \mathrm{Tr}(\rho(D_i))=x_i \rbrace$.\end{proof}
Topologically, as $Y$ is a real algebraic manifold, it is a stratified manifold. Its strata are in turn semi-algebraic sets.
\\ 
We can apply the algebraic Sard theorem to the map $(f_{D_e})_{e \in E} : \mathcal{M}(\Sigma) \rightarrow \mathbb{R}^I$ restricted to any stratum $Z \subset Y$. We obtain that for $x$ in a dense open subset of $\mathbb{R}^I$, the map $(f_{D_e})_{e \in E}$  does not have $x$ as a critical value on the stratum $Z$. This is the same as saying that $\Lambda_x^{\mathcal{D}}$ is transverse to the stratum $Z$ of $Y$, so for generic $x$ it is transverse to each stratum of $Y$. The same applies to showing the transversality of $\Lambda_x^{\mathcal{D}}$ and $\Lambda_y^{\mathcal{F}}$ for generic $x$ and $y$.
\\
\\ The only thing that remains to prove is the part about the connectedness of $\Lambda_x^{\mathcal{D}}\setminus Y$. We will need the following lemma:
\begin{lemma}\label{codim}The real algebraic subvariety $Y=\mu^{-1}(\partial P)$ has codimension $2$ in $\mathcal{M}(\Sigma)$.
\end{lemma}
\begin{proof} Recall that the subvariety $Y$ is included in the union of subvarieties $$\lbrace \rho \ / \mathrm{Tr}(\rho ([C_e,C_f]))=2 \rbrace$$ where $C_e$ and $C_f$ are curves of the pair of pants decomoposition that bound a common pair of pants. Thus we only have to show that subvarieties of this type have codimension at least $2$.
\\  Let $\gamma$ and $\delta$ two disjoint non-isotopic simple closed curves in the surface $\Sigma$. We show that the subvariety $\lbrace \tr (\rho ([\gamma ,\delta]))=2 \rbrace$ has codimension at least $2$ in $\mathcal{M}(\Sigma)$. This contains the subvariety of abelian representations, which has dimension $2 g$ and thus codimension greater than $2$, as $g \leq 2$. Therefore we are interested in the codimension near an irreducible representation $\rho$.
\\ One possibly is that $\rho \in \lbrace \rho \in \mathcal{M}_{irr}(\Sigma) \, / \, \rho(\gamma)=\pm I \rbrace$. This semi-algebraic set is of codimension at least $3$ in $\mathcal{M}(\Sigma)$. 
\\ Indeed, consider the projection $\pi : \mathcal{M}(\Sigma) \rightarrow \textrm{Hom}(\pi_1 \Sigma , \textrm{SO}_3)/\textrm{SO}_3$. This map is a cover on its image, and thus conserves dimension. The representation $\rho$ is sent to a representation $\tilde{\rho}$ such that $\tilde{\rho}(\gamma)=I$ in $\textrm{SO}_3$. Taking $\tilde{\rho}(\gamma)=I$ amounts to replace $\mathcal{M}(\Sigma)$ with the moduli space of $\Sigma //\gamma$, that is we smash $\gamma$ to a point.
\\ We have two cases: either $\gamma$ is separating, we obtain the wedge of two surfaces of genus $g_1$ and $g_2$ with $g=g_1+g_2$, whose fundamental space is $\pi_1 \Sigma_{g_1} * \pi_1 \Sigma_{g_2}$, and whose moduli space has dimension $6 g_1-6 +6 g_2 -6=6 g-12$. When the curve $\gamma$ is non-separating, $\Sigma // \gamma$ has fundamental group $\pi_1 \Sigma_{g-1} * \mathbb{Z}$ and the moduli space has dimension $6 (g-1)-6+3=6 g-9$. In either case, the codimension is greater than $3$.
\\
\\ Thus we need only to show that $\lbrace \tr (\rho ([\gamma ,\delta]))=2 \rbrace$ has codimension at least $2$ in the neighborhood of points $\rho$ such that $\rho(\gamma) \neq \pm I$ and $\rho(\delta)\neq \pm I$.
\\ We denote by $F$ the function $\rho \rightarrow \tr (\rho ([\gamma ,\delta]))$. Let $\rho$ an irreducible representation in $\lbrace F(\rho)=2 \rbrace$ with $\rho(\gamma) \neq \pm I$ and $\rho (\delta) \neq \pm I$. As we consider representation in $\su$, the function $F$ has a local maximum at $\rho$, thus the differential $D_{\rho}F$ vanishes. To understand the local structure of $\lbrace F=2 \rbrace$ near $\rho$, we compute the 2nd differential of $F$. We will exhibit a subspace of the tangent space of dimension $2$ on which $D^2 F$ is definite negative. This will prove that the tangent space of $\lbrace \tr (\rho ([\gamma ,\delta]))=2 \rbrace$ has codimension at least $2$, and hence finish the proof of our claim.
\\ \textbf{Claim}: There is a pair of pants $P'$ such that $\rho$ is commutative on $P'$ and the restriction map $\mathcal{M}(\Sigma) \rightarrow \mathcal{M}(P')$ is a submersion.
\\ Indeed it is an elementary fact that the tangent space $T_{\rho} \mathcal{M}(\Sigma)$ is isomorphic to the twisted cohomology group $H^1 (\Sigma , \ad \rho)$ where $\textrm{Ad} \rho $ stands for the adjoint representation of $\rho$ (see for example \cite{Mar09}). 
\\ Consider the exact sequence in twisted cohomology associated to the pair $(\Sigma ,P)$:
\begin{displaymath} H^1( \Sigma ,\ad \rho)\rightarrow H^1 (P ,\ad \rho) \rightarrow H^2 (\Sigma , P ,\ad \rho)
\end{displaymath}
By Poincaré duality, we have $H^2(\Sigma , P , \ad \rho) \simeq H^0 (\Sigma \setminus P , \ad \rho)^*$. If $\Sigma \setminus P$ is connected, $\rho$ must be irreducible on $\Sigma \setminus P$ and then $H^0(\Sigma \setminus P ,\ad \rho) =0$. When it is not the case, either $\rho$ is irreducible on each components of $\Sigma \setminus P$ and $H^0(\Sigma \setminus P ,\ad \rho) =0$, or there is another pair of pants $P'$ in the decomposition on which $\rho$ is commutative and such that $\Sigma \setminus P'$ is connected: just take one of the connected components of $\Sigma \setminus P$ on which $\rho$ is commutative, it is a surface with one or two boundary curves, and any decomposition of such a surface has a nonseparating pair of pants disjoint from the boundary. 
\\ Hence, we can always assume that $H^0(\Sigma \setminus P ,\ad \rho) =0$ and thus that the restriction map is a submersion. 
\\ 
\\ Now we only have to show that $\lbrace \rho' \, / \, F(\rho')=2 \rbrace$ is of codimension $2$ in $\mathcal{M}(P)$. 
\\ We now compute the second derivative of the restriction $F|_{\mathcal{M}(P)}$. As $\rho(\gamma)$ and $\rho(\delta)$ commute, up to conjugation we can assume that $\rho(\gamma)$ and $\rho(\delta)$ are diagonal with coefficients $(e^{i \theta},e^{-i \theta})$ and $(e^{i \varphi},e^{-i \varphi})$ respectively. We denote these diagonal matrices by $U_{\theta}$ and $U_{\varphi}$ respectively. We compute the second differential of $F|_{\mathcal{M}(P)}$ on $H^1 (P ,\ad \rho)$, space which is isomorphic to 
\begin{displaymath}H^1 (P ,\ad \rho )=\mathrm{su}_2 \bigoplus \mathrm{su}_2 / \lbrace ( \xi - U_{\theta} \xi U_{\theta}^{-1} ,\xi - U_{\varphi} \xi U_{\varphi}^{-1} , \, \, \xi \in \mathrm{su}_2 \rbrace
\end{displaymath}
Let us introduce the notations: $j=\begin{pmatrix}0 & 1 \\ -1 & 0\end{pmatrix}$ and $k=\begin{pmatrix}0 & i \\ i & 0\end{pmatrix}$. 
\\Now the vector space $V=\lbrace (\xi ,0) ,\xi \in \textrm{Vect}(j ,k) \rbrace$ is a subspace of dimension $2$ of $H^1 (P ,\ad \rho)$ (as $U_{\theta}$ and $U_{\varphi}$ have the same commutant, no $(\xi ,0)$ is equivalent to $(0,0)$ in $H^1(P ,\ad \rho)$). We can endow it with a norm $||\cdot||$ for which $(j,k)$ is an orthonormal basis. 
\\ We show that $D^2 F$ is definite negative on $V$: we have 
\begin{multline*}\tr (U_{\theta} e^{\xi}U_{\varphi}e^{-\xi} U_{-\theta -\varphi})=2-\tr(U_{\theta}\xi U_{\varphi} \xi U_{-\theta-\varphi} )+2 \tr (\xi^2) +O(||\xi||^3)
\\=2+\tr(\xi^2 U_{-2 \varphi})-2 ||\xi||^2 +O(||\xi||^3)
\\ =2+2||\xi||^2(\cos (2\varphi )-1)+O(||\xi||^3)=2 -4 \sin (\varphi) ||\xi||^2+O(||\xi||^3)
\end{multline*}
As the second differential is definite negative on a subspace of dimension $2$, in a neighborhood of $\rho$ the space $\lbrace \rho' \, / \, F(\rho')=2 \rbrace$ is of codimension at least $2$. 
\end{proof}
To finish the proof of Proposition \ref{laginter} we deduce the connectedness of $\Lambda_x^{\mathcal{D}} \cap Y$ for generic $x$ from the Lemma \ref{codim}. Recall that for generic $x$ the intersection  $\Lambda_x^{\mathcal{D}}\cap Y$ is transverse, thus the intersections of $\Lambda_x^{\mathcal{D}}$ with each $\lbrace \tr (\rho ([C_e,C_f]))=2 \rbrace$ are transverse. Also, for generic $x \in \mathbb{R}^n$ we can assume that the set $\Lambda_x^{\mathcal{D}}$ is either empty or a torus of dimension $n$ inside $\mathcal{M}(\Sigma)$. In the former case the connectedness is trivial. In the latter case, to show that $\Lambda_x^{\mathcal{D}}\cap \mu^{-1}(\mathring{P})$ is connected, we will only need to show that $\Lambda_x^{\mathcal{D}}\cap Y$ is of codimension at least $2$ in $\Lambda_x^{\mathcal{D}}$. What we mean by codimension at least $2$, is that each strata of this stratified variety has topological codimension at least $2$ in the torus $\Lambda_x^{\mathcal{D}}$. As the intersection $\Lambda_x^{\mathcal{D}}\cap Y$ is transverse, it follows from the fact that $Y$ is of codimension at least $2$ in $\mathcal{M}(\Sigma)$.
\end{proof}
\section{Pairings of eigenvectors of curve operators}
\label{sec:pairing}
\subsection{Pairing in the half-form bundle}
\label{sec:halfform}
In this short preliminary section, we define various pairings for the half-form bundle $\delta$ on Kähler manifold $M$. These pairing forms will be useful to describe the asymptotic expansions occuring in the pairing of quasimodes in Section \ref{sec:formula}. 
\\We consider a general Kähler vector space $E$ of complex dimension $n$ with symplectic form $\omega$ and complex structure $J$. 
\\ Choose two transverse Lagrangian subspaces $\Gamma_1$ and $\Gamma_2$ of the vector space $E$.
\\ Let $\Lambda^{n,0}E^*$ be the space of complex $n$-forms on $M$, of which $\delta$ is a square root. We have maps:
$$\pi_i: \Lambda^{n,0}E^* \rightarrow \Lambda^n \Gamma_i^* \otimes \mathbb{C}$$ 
which consist of restricting a complex $n$-form on $E$ to $\Gamma_i$, getting an isomorphism between complex $n$-forms on $E$ and the complexification of real $n$-forms on $\Gamma_i$. On the other hand we have maps
$$\Lambda^n \Gamma_i^* \otimes \mathbb{C} \rightarrow \Lambda^{n,0} E^*$$
extending $n$-form on $\Gamma_i$ to complex $n$-form on $E$. These maps are well defined as the $\Gamma_i$ are Lagrangian (thus $\Gamma_i \bigoplus J \Gamma_i =E$), and are the inverse isomorphisms of the first couple of maps. 
\\ Now given $n$-forms on $\Gamma_1$ and $\Gamma_2$ the wedge product create an $2 n$-form on $E$, which we can compare with the Liouville form $\frac{\omega^n}{n!}$. Combining with the restriction maps $\pi_i$, we get the pairing 
$$\begin{array}{ccccc} \Lambda^{n,0}E^* & \times & \Lambda^{n,0}E^* & \rightarrow & \mathbb{C}
\\ \alpha & , & \beta & \rightarrow & (\alpha ,\beta)_{\Gamma_1,\Gamma_2} =i^{n(2-n)}\frac{\pi_1 (\alpha) \wedge \pi_2 (\beta)}{\omega^n}
\end{array}$$
If we have a complex line $\delta$ with an isomorphism $\phi :\delta^{\otimes 2} \rightarrow \Lambda^{n ,0 }E^* =\mathbb{C}$, a pairing for $\delta$ associated to Lagrangians $\Gamma_1$ and $\Gamma_2$ is 
$$(\alpha ,\beta)_{\Gamma_1 ,\Gamma_2}=\sqrt{(\alpha^{\otimes 2} ,\beta^{\otimes 2})_{\Gamma_1 ,\Gamma_2}}$$
The determination of the square root goes as follows: recall that we also have an Hermitian pairing
$$\begin{array}{ccccc} \Lambda^{n,0}E^* & \times & \Lambda^{n,0}E^* & \rightarrow & \mathbb{C}
\\ \alpha & , & \beta & \rightarrow & (\alpha ,\beta) =i^{n(2-n)}\frac{\alpha \wedge \overline{\beta}}{\omega^n}
\end{array}$$
and thus also a pairing $(\alpha, \beta)= \sqrt{(\alpha^{\otimes 2} ,\beta^{\otimes 2})}$
\\ It is shown in \cite{Cha10a} that when $\Gamma_2=J \Gamma_1$, this pairing is the same as the pairing $(\alpha ,\beta)_{\Gamma_1 ,\Gamma_2}$ up to a positive constant. We require the same to be true for the corresponding pairing on $\delta$, for general transverse $\Gamma_1$ and $\Gamma_2$ we extend the definition so that it depends continuously on $\Gamma_1$ and $\Gamma_2$.
\\
\\ Suppose we consider, instead of just a Kähler vector space, a (connected) Kähler manifold $M$ of complex dimension $n$, equipped with a half-form bundle $\delta$. Pick a point $x$ in $M$, pairings on $\delta_x$ can be defined by the above procedure. We get pairings on any $\delta_y$ for $y \in M$ by extending these by continuity. 
\subsection{Quasimodes of curve operators}
\label{sec:quasimodes}
Let $\mathcal{C}=(C_e)_{e \in E}$ and $\mathcal{D}=(D_f)_{f \in F}$ be two collections of curves that form pair of pants decompositions of $\Sigma$. We assume that these pair of pants decompositions have planar dual graphs. We call $I_r$ (resp. $J_r$) the set of $r$-admissible colorings associated to the pair of pants decomposition $\mathcal{C}$ (resp. $\mathcal{D}$), $(\varphi_{\alpha}^r)_{\alpha \in I_r}$ and $(\psi_{\beta}^r)_{\beta \in J_r}$ the associated basis of $V_r(\Sigma)$.
\\ 
\\ We introduce the polytope $P$ (resp. $Q$) that is the image of $\mathcal{M}(\Sigma)$ by the momentum mapping $\mu: \mathcal{M}(\Sigma) \rightarrow \mathbb{R}^E$ such that $\mu(\rho)=\textrm{acos}(\frac{1}{2}\tr (\rho(C_e)))$ (resp. $\mu '$ such that $\mu ' (\rho)=\textrm{acos}(\frac{1}{2}\tr (\rho(C_e)))$.
\\ We want to study the asymptotic behavior of pairings $\langle \varphi^r_{\alpha_r} ,\psi^r_{\beta_r} \rangle$ for large level $r$. We impose conditions on the sequence $\alpha_r$ and $\beta_r$: firstly, we need $\frac{\alpha_r}{r}$ and $\frac{\beta_r}{r}$ to stay in compact subsets of $\mathring{P}$ and $\mathring{Q}$. Indeed the representation of common eigenvectors of commuting Toeplitz operators works well only for eigenvalues corresponding to regular values of the principal symbols: that is, here, the interior of the polytopes. Secondly, as the eigenvectors will concentrate on Lagrangian $\Lambda_{-2 \cos (\pi \frac{\alpha_r}{r})}^{\mathcal{C}}$ and $\Lambda_{-2 \cos(\pi \frac{\beta_r}{r})}^{\mathcal{D}}$, thus we want the intersection to be transverse. By Section \ref{sec:lag}, there is an open dense subset $W_{\pitchfork}$ of $\mathring{P} \times \mathring{Q}$ such that for any $x,y \in W_{\pitchfork}$ the Lagrangian $\Lambda_{-2 \cos (\pi \frac{\alpha_r}{r})}^{\mathcal{C}}$ and $\Lambda_{-2 \cos(\pi \frac{\beta_r}{r})}^{\mathcal{D}}$ are transverse. 
\\ So we impose the following conditions on $\alpha_r$ and $\beta_r$:
\\
\\ \textbf{Property (*) :} We say that a sequence $(\alpha_r , \beta_r) \in I_r \times J_r$ satisfy Property (*) if there is a compact subset $K$ in $W_{\pitchfork}$, such that $(\frac{\alpha_r}{r},\frac{\beta_r}{r}) \in K$ for any $r$.
\\
\\ 
As explained in Section \ref{sec:holosec}, the pants decomposition $\mathcal{C}$ induces an isomorphism $\Phi_r$ from $V_r(\Sigma)$ into $H_r \subset H^0(M , L^r \otimes \delta)$, where $M=\mathbb{R}^E \times T$, where $T$ is the torus that is the fiber of the map $\mu$. 
The vectors $\psi_{\beta}^r$, as linear combinations of the $\varphi_{\alpha}^r$, can be viewed as elements of $H_r$, furthermore their $\mathbb{L}^2$-norms are concentrated near $P \times T$. 
\\ The curve operators $T_r^{D_f}$ have $\psi_{\beta}^r$ as common eigenvectors with eigenvalues $-2 \cos (\frac{\pi \beta_f}{r})$. But we have given an expression of the curve operators acting on $H_r$ as Toeplitz operators on $U=\mathring{P}\times T$. Call $\sigma^{D_f}$ the Toeplitz symbol associated to $T_r^{D_f}$, that is the trace function associated to the curve $D_f$. 
\begin{proposition}Let $W$ be a compact subset of $\mathring{P}$ and $\beta_r \in J_r$ be a sequence such that $\frac{\beta_r}{r} \in W$. The vectors $\psi_{\beta_r}^{r}$ are microlocal solutions on $U$ of
$$T_r^{\sigma^{D_f}}\Psi_r=-2 \cos( \frac{\pi \beta_{r,f}}{r})\Psi_r$$
meaning that they satisfy the following 2 conditions:
\begin{itemize}
\item[-](admissibility condition) For any compact subset $K \subset U$, there exists constants $C$ and $N$ such that $|\Psi_r(x)|\leq Cr^N$ for all $x \in K$.
\item[-](quasimode condition) For any $x \in U$ there is a function $\varphi$ with compact support containing $x$, such that $\Pi_r (\varphi \Psi_r) = \Psi_r+O(r^{-\infty})$
\\ and $T_r^{D_f} \Pi_r (\varphi \Psi_r)=-2 \cos(\frac{\pi \beta_{r,f}}{r}) \Psi_r+O(r^{-\infty})$ uniformly on a neighborhood of $x$.
\end{itemize}
Such a sequence of vectors $\Psi_r$ is also called a quasimode of the Toeplitz operators $T_r^{D_f}$, for the joint eigenvalue $-2 \cos(\frac{\pi \beta_{r,f}}{r})$
\end{proposition}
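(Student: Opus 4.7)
The plan is to verify the two microlocal conditions separately, using that $\psi_{\beta_r}^r$ is literally an eigenvector of $T_r^{D_f}$ with eigenvalue $-2\cos(\pi\beta_{r,f}/r)$ by the BHMV construction, so the eigenvalue equation in $V_r(\Sigma)$ transports to $H_r$ through the isometry $\Phi_r$ of Proposition \ref{isom}. What must be proved is then purely a statement about holomorphic sections of $L^r\otimes \delta$ and can be read off from the Bergman kernel asymptotics of Proposition \ref{bergman}.

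For the admissibility condition, since $\psi_{\beta_r}^r\in H_r$ the reproducing kernel property gives the pointwise bound $|\psi_{\beta_r}^r(x)|^2 \leq N(x,x) \, \|\psi_{\beta_r}^r\|_{\mathbb{L}^2}^2$, and Proposition \ref{bergman} shows that $N(x,x)$ is $O(r^n)$ uniformly on any compact $K\subset U = \mathring{P}\times T$. Since $\Phi_r$ is unitary, $\|\psi_{\beta_r}^r\|_{\mathbb{L}^2} = 1$, so $|\psi_{\beta_r}^r(x)| \leq Cr^{n/2}$ on $K$.

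For the quasimode condition, choose $\varphi$ a smooth cutoff with compact support in $U$ equal to $1$ on a neighborhood $V$ of $x$. Because $\Pi_r\psi_{\beta_r}^r = \psi_{\beta_r}^r$, I would write $\Pi_r(\varphi\psi_{\beta_r}^r) - \psi_{\beta_r}^r = -\Pi_r\bigl((1-\varphi)\psi_{\beta_r}^r\bigr)$ and estimate the right-hand side at $x'\in V$ via the integral with kernel $N(x',y)$. On the support of $1-\varphi$, which is disjoint from $V$, two decay mechanisms combine: the Gaussian off-diagonal decay $N(x',y)=O(r^{-\infty})$ supplied by Proposition \ref{bergman} for $y$ in a compact of $\mathring{P}\times T$, and for $y$ outside such a compact the explicit expansion $N(x',y)=\tfrac{1}{\kappa}\sum_{\alpha\in I_r} e^{-r\|x'-\alpha/r\|^2/4-r\|y-\alpha/r\|^2/4}e^{i\alpha\cdot(\theta_{x'}-\theta_y)}$ is itself exponentially small in $r$ because $\alpha/r\in P$ is bounded while $\|y-\alpha/r\|$ is not. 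Paired with the polynomial bound on $\psi_{\beta_r}^r$ from the admissibility step, this yields $\Pi_r((1-\varphi)\psi_{\beta_r}^r)(x')=O(r^{-\infty})$ uniformly on $V$. Applying $T_r^{D_f}$ to the identity $\Pi_r(\varphi\psi_{\beta_r}^r)=\psi_{\beta_r}^r + R_r$ with $R_r$ a section of $H_r$ that is $\mathbb{L}^2$-bounded and $O(r^{-\infty})$ near $x$, then using Theorem \ref{toeplitz} to write $T_r^{D_f}=\Pi_r m_{\chi\sigma^{D_f}} + O(r^{-\infty})_{\mathbb{L}^2\to L^\infty(K')}$ on a compact $K'\subset V$, and invoking the same Bergman-kernel off-diagonal decay one more time to propagate the smallness of $R_r$ through $\Pi_r m_{\chi \sigma^{D_f}}$, one obtains $T_r^{D_f}\Pi_r(\varphi\psi_{\beta_r}^r) = -2\cos(\pi\beta_{r,f}/r)\psi_{\beta_r}^r + O(r^{-\infty})$ on a shrunken neighborhood of $x$.

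The main obstacle is the non-compactness of $M$: the Bergman kernel is only polynomially bounded globally and its Gaussian decay is stated on compacts of $(\mathring{P}\times T)^2$, so one must carefully combine on-compact Gaussian decay with the off-compact exponential decay coming from the fact that all relevant indices $\alpha$ satisfy $\alpha/r\in P$. Once this bookkeeping is in place the argument is a standard piece of Toeplitz microlocal analysis.
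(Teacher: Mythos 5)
Your argument is correct, but it takes a genuinely different route from the paper's. The paper's proof is a short, basis-level computation: for admissibility it observes that $\psi_{\beta_r}^r=\sum c_\alpha e_\alpha$ with at most $r^{|E|}$ nonzero coefficients, each of modulus $\leq 1$, and that each $e_\alpha$ has pointwise Hermitian norm $\leq 1$, giving immediately $|\psi_{\beta_r}^r(x)|\leq r^n$ everywhere. For the quasimode condition the paper picks a $T$-\emph{invariant} cutoff $\varphi=\varphi(t)$; this is the key trick, because then $m_\varphi$ does not mix Fourier modes, so $\Pi_r m_\varphi$ is diagonal in the $(e_\alpha)$ basis, with diagonal entries $\approx\varphi(\alpha/r)$ by Lemma \ref{prodscal}. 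Combined with the band structure of curve operators (finitely many nonzero diagonals), the two quasimode identities reduce to elementary statements about which $e_\alpha$ contribute near $x$. Your proof replaces all of this with reproducing-kernel and Bergman-kernel estimates: admissibility via $|\psi(x)|^2\leq N(x,x)\|\psi\|^2$ and the diagonal asymptotics of Proposition \ref{bergman} (yielding the slightly sharper exponent $n/2$), and the quasimode condition via off-diagonal Gaussian decay of $N(x',y)$ combined with the explicit sum formula for $N$ away from $\mathring P\times T$. This is sound, and is in fact closer to the abstract Toeplitz microlocal framework of \cite{Cha03}\cite{Cha06}; what it costs is the bookkeeping around the non-compactness of $M$, which you correctly flag and which is what the paper's $T$-invariance trick was designed to sidestep. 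What you gain is independence from the particular basis structure: your argument would apply to an arbitrary smooth cutoff $\varphi$, whereas the paper's argument as written relies on $\varphi$ being $\theta$-independent.

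One small point worth making explicit in your write-up: when you invoke ``the polynomial bound on $\psi_{\beta_r}^r$ from the admissibility step'' to control the integral over the region far from $\mathring P\times T$, note that the reproducing-kernel bound $|\psi(y)|^2\leq N(y,y)$ actually gives a \emph{global} polynomial bound on $M$ (indeed $N(y,y)=\tfrac{1}{\kappa}\sum_\alpha e^{-r\|u-\alpha/r\|^2/2}\leq |I_r|/\kappa$), not merely one on compacts of $U$; this is what you implicitly need, so the argument closes, but it is better to say so.
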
 
\begin{proof}Indeed, $\psi_{\beta_r}^r$ is a linear combination of the vectors $\varphi_{\alpha}$ which are in number less or equal than $r^{|E|}$ and the coefficients in the linear combination are all less or equal than $1$ as $\psi_{\beta_r}^r$ is of norm $1$. As $h(e_{\alpha},e_{\alpha})(x) \leq 1$ for all $\alpha$ and $x \in \mathbb{R}^E$, the vectors $\psi_{\beta_r}^r$ satisfy the admissibility condition.
\\ For the quasimode condition, choose $x=(t,\theta) \in U$, let $\varphi$ be a $T$ invariant cutoff function with compact support in $U$ and identically equal to $1$ on a set of the form $V\times T$ where $V$ is a neighborhood of $x$. Then up to $O(r^{-\infty})$, the projection $\Pi_r (\varphi \psi_{\beta_r}^r)$ has the same coefficients as $\psi_{\beta_r}^r$ on each $e_{\alpha}$ with $\frac{\alpha}{r}\in V$, and on a small neighborhood $V' \subset V$ of $x$, any other $e_{\alpha}$ is $O(r^{-\infty})$. Thus $\Pi_r (\varphi \psi_{\beta_r}^r)=\psi_{\beta_r}^r +O(r^{-\infty})$ on $V'$. 
\\ Finally, we know that $\psi_{\beta_r}^r$ is an eigenvector of $T_r^{D_f}$ which on $U$ acts as a Toeplitz operator of symbol $\sigma^{D_f}$ by Theorem \ref{toeplitz}. As $\psi_{\beta_r}^r$ has the same coefficients on the $e_{\alpha}$ with $\alpha \in V$ up to $O(r^{-\infty})$, and $T_r^{D_f}$ has a finite number of nonzero diagonals, 
$$T_r^{\sigma^{D_f}}\Pi_r (\varphi \psi_{\beta_r}^r)=-2 \cos (\frac{\pi \beta_{r,f}}{r}) \varphi \psi_{\beta_r}^r+O(r^{-\infty})$$ 
on $V' \times T$.
\end{proof}
The operators $T_r^{D_f}$ of $H_r$ commute as they are curve operators on disjoint curves. 
\\ Quasimodes of commuting Toeplitz operators are well understood. When $T_1 ,\ldots ,T_n$ are commuting Toeplitz over a Kahler manifold $M$ of dimension $2 n$, with principal symbols $\mu_i$, and $E$ is a regular value of $\mu : M \rightarrow \mathbb{R}^n$, the set $\mu^{-1}(E)$ is a Lagrangian torus of $M$ by the Arnold-Liouville theorem.
\\ Quasimodes associated to eigenvalues $E^i$ concentrate on the Langrangian torus $\Lambda_E$, and some Ansatz can be used to compute the asymptotic behavior of quasimodes. We will follow the approach of \cite{Cha03}, which describes quasimodes of such operators as so-called "Lagrangian sections".
\\ Let $U \subset U'$ be two contractible neighborhoods of $E$, consisting of regular values of $\mu$. By Arnold-Liouville theorem, $\mu^{-1}(U')$ is diffeomorphic to $U' \times T$ where $T$ is an $n$-dimensional torus, and $\mu$ acts as the projection $U' \times T \rightarrow U'$ on it. For any $E' \in U'$, the torus $\Lambda_{E'}$ is Lagrangian. We take a sequence $E^r \in U$, and we are interested in quasimodes of the Toeplitz operators $T_1,\ldots ,T_n$, that is microlocal solutions of 
\begin{equation}
T_i^r \Psi_r =E_i^r \Psi_r
\label{quasimode}
\end{equation}
Proposition 3.5 \cite{Cha03} gives a formula that allows to compute quasimodes on contractible subsets in the following way:

\begin{proposition}\label{lagsection}
Let $E$ be a regular value of the momentum map $\mu :M \rightarrow \mathbb{R}^n$ and $U \subset U'$ be small contractible neighborhoods of $E$, so that $\mu^{-1}(U)=U \times T$ where $T$ is a torus. Let $V$ be a contractible open subset of the torus $T$. Then:
\begin{itemize}
\item[-] There is a smooth map $F_V :U \rightarrow \mathbb{L}^2(U'\times V, L)$ such that for any $E \in U$, the section $F_V(E)$ is flat of norm $1$ on $\Lambda_E$ and of norm $<1$ elsewhere in $U' \times V$, such that $F_V(E)$ is holomorphic in a neighborhood of $\Lambda_E$ in $U \times V$, and such that all $F_V(E)$ for $E \in U$ have the same compact support in $U' \times V$
\item[-] There is a sequence of smooth map $g_V(\cdot, r) : U \rightarrow \mathbb{L}^2(U' \times V ,\delta)$ such that $g_V(E,r)$ is holomorphic in a neighborhood of $\Lambda_E$, and $g_V(\cdot,r)$ has an asymptotic expansion $g_V(\cdot,r)=g_V^0(\cdot)+\frac{1}{r}g_V^1(\cdot) +\ldots$ with $g_V^0(E)|_{\Lambda_E}$ satisfying the transport equations $\mathcal{L}_{X_e} g_V^0(E)=0$ where $X_e$ is the symplectic gradient of the function $\mu_e$.
\end{itemize}
such that $F_V(E_r)^r g(E_r,r)$ is a microlocal solution on $U \times V$ of \eqref{quasimode}
\end{proposition}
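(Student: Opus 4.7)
The plan is to adapt the construction of Lagrangian sections carried out by Charles in \cite{Cha03} (where this is essentially Proposition 3.5) to our open K\"ahler manifold $M=\mathbb{R}^E \times T$. I would organize the proof into two independent parts matching the two bullets of the statement, together with a consistency check in the non-compact setting.

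For the existence of $F_V(E)$, the starting point is that $\Lambda_E \subset U' \times V$ is Lagrangian, so $\omega$ (hence the Chern curvature of $L$) vanishes on it, and $L|_{\Lambda_E}$ is therefore a flat line bundle on a contractible base. I would pick a unit flat section $s_E$ of $L|_{\Lambda_E}$, unique up to a global phase. Since $\Lambda_E$ is totally real in $M$, $s_E$ extends uniquely to a holomorphic section $F_V(E)$ on a complex neighborhood of $\Lambda_E$ in $U' \times V$. A local computation in adapted coordinates (equivalently, in the Bargmann-type trivialization of $L$ provided in Section \ref{sec:holosec}) shows $|F_V(E)| = \exp(-q_E/2)$ for a smooth non-negative function $q_E$ vanishing exactly on $\Lambda_E$ with positive definite transverse Hessian, which delivers the norm conditions. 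Smoothness in $E$ is automatic, and multiplying by a fixed cutoff function with compact support in $U' \times V$ enforces the common compact support.

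For $g_V$, I would make the WKB Ansatz $\Psi_r = F_V(E_r)^r g_V(E_r,r)$ with $g_V(E,r)\sim \sum_{k\geq 0} r^{-k} g_V^k(E)$ and solve order by order in $r$. By Theorem \ref{toeplitz}, $T_i^r$ is a Toeplitz operator with principal symbol $\mu_i$, and Lemma \ref{prodscal} combined with the stationary-phase analysis of Toeplitz operators applied to Lagrangian sections yields the recursion: at order $r^0$, $(\mu_i-E_i)g_V^0 = 0$ on $\Lambda_E$, automatic by definition; at order $r^{-1}$, the transport equation $\mathcal{L}_{X_i} g_V^0(E) = 0$ on $\Lambda_E$; at order $r^{-k-1}$, inhomogeneous transport equations for $g_V^k$. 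The transport equations are solvable because the commuting Hamiltonian flows of the $\mu_i$ integrate to a transitive torus action on $\Lambda_E$, so $g_V^0(E)$ is determined on $\Lambda_E$ by its value at a single point; the cohomological obstructions at higher orders vanish as in \cite{Cha03}. Each $g_V^k(E)$ is then extended holomorphically off $\Lambda_E$ (possible since $\Lambda_E$ is totally real), and a Borel summation turns the formal series into a genuine smooth family $g_V(\cdot,r)$.

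The main obstacle, compared with the compact setting of \cite{Cha03}, is that $M$ is open and $H_r$ is only a proper subspace of $H^0(M, L^r\otimes\delta)$ cut out by $\Lambda$-equivariance. The crucial bridge is Proposition \ref{bergman}: the Bergman kernel of $\Pi_r$ is localized around the diagonal up to $O(r^{-\infty})$ on compact subsets of $\mathring{P}\times T$ with the expected Gaussian profile. This is exactly what the standard Toeplitz symbolic calculus needs in order to apply verbatim on compact subsets of $U$, and it guarantees that the sequence $\Pi_r\bigl(F_V(E_r)^r g_V(E_r,r)\bigr)$ is an admissible microlocal solution of $T_i^r\Psi_r = E_i^r\Psi_r$ on $U\times V$.
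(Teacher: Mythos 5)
Your proposal is correct and follows essentially the same route as the paper: both invoke the Lagrangian section machinery of Charles (\cite{Cha03}, \cite{Cha06}) and adapt it to the open manifold $M=\mathbb{R}^E\times T$ by using the Bergman kernel asymptotics of Proposition~\ref{bergman} to localize all integrations to compact subsets of $\mathring{P}\times T$ up to $O(r^{-\infty})$, after which the stationary-phase computation of how Toeplitz operators act on the Ansatz $F_V^r g$ goes through verbatim and yields the transport equations $\mathcal{L}_{X_i}g_V^0=0$ at first order. The one small difference is that you spell out the elementary construction of $F_V(E)$ (flat unit section on the Lagrangian plus holomorphic extension off the totally real submanifold) and the Borel summation step, whereas the paper simply cites \cite{Cha03} for these; this extra detail is consistent with what is cited and does not change the argument.
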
 
Furthermore, the proposition 3.6 of \cite{Cha03} garanties that quasimodes are always of this form:
\begin{proposition} \label{lagsection2} Let $E$ be a regular value of $\mu$ and $U$ and $V$ be defined as in Proposition \ref{lagsection}. Suppose that $\psi_r$ is a microlocal solution of \eqref{quasimode} on $U \times V$. Then there exists a sequence $\lambda_r$ with $\lambda_r =O(r^N)$ for some $N$ such that:
$$\psi_r = \lambda_r  F_V(E_r)^r g(E_r,r) +O(r^{-\infty})$$
\end{proposition}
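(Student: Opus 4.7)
The plan is to show that on the contractible open set $U\times V$, the joint eigenvalue system $T_i^r\Psi_r=E_i^r\Psi_r$ has, modulo $O(r^{-\infty})$, a one-dimensional space of admissible microlocal solutions generated by the model Lagrangian section $F_V(E_r)^r g(E_r,r)$ of Proposition \ref{lagsection}. The argument proceeds in four steps: localize $\psi_r$ near $\Lambda_{E_r}$, write it as a Lagrangian section, compare the resulting symbol with $g(E_r,r)$ order by order using the eigenvalue equations, and read off $\lambda_r$.

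First, I would pin down the microsupport of $\psi_r$. Since each $T_i^r$ acts on $U$ as a Toeplitz operator with principal symbol $\mu_i=\sigma^{D_i}$ by Theorem \ref{toeplitz}, and since the Bergman kernel has the Gaussian off-diagonal decay supplied by Proposition \ref{bergman}, a standard microlocal argument shows that $\psi_r$ is $O(r^{-\infty})$ in supremum norm outside any neighborhood of the joint level set $\Lambda_{E_r}=\bigcap_i \mu_i^{-1}(E_i^r)$ inside $U\times V$. Only the behavior on a tubular neighborhood of $\Lambda_{E_r}$ matters.

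Second, I would put $\psi_r$ into Lagrangian-section form. The explicit WKB expression for the Bergman kernel given by Proposition \ref{bergman} implies that on a contractible piece of $U\times V$, any admissible section of $L^r\otimes\delta$ whose microsupport lies on a Lagrangian $\Lambda$ can be written, up to $O(r^{-\infty})$, as $F^r h$ with $F$ a local section of $L$ of modulus one and flat on $\Lambda$, and $h=h_0+r^{-1}h_1+\ldots$ a symbol section of $\delta$ holomorphic near $\Lambda$. Applying this with $\Lambda=\Lambda_{E_r}$ and using the same flat section $F=F_V(E_r)$ as in the model, I obtain $\psi_r=F_V(E_r)^r h(\cdot,r)+O(r^{-\infty})$ for some symbol $h(\cdot,r)$ on a neighborhood of $\Lambda_{E_r}$.

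Third, the eigenvalue equations constrain $h$. Expanding $T_i^r(F_V(E_r)^r h)-E_i^r F_V(E_r)^r h$ in powers of $1/r$ and matching with $O(r^{-\infty})$ gives, at principal order, the transport equation $\mathcal{L}_{X_i}h_0|_{\Lambda_{E_r}}=0$; since the Hamiltonian flows of the $X_i$ act transitively on $\Lambda_{E_r}$ and $V$ is contractible, the solution is unique up to a scalar, forcing $h_0=\lambda_r^{(0)}g_V^0(E_r)$. Inductively, at order $1/r^k$ the equations are inhomogeneous transport equations for $h_k$ with exactly the same source as those satisfied by $g_V^k(E_r)$, up to an additive constant along the torus orbits which can be absorbed by choosing $\lambda_r^{(k)}$ suitably. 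A Borel summation $\lambda_r=\lambda_r^{(0)}+r^{-1}\lambda_r^{(1)}+\ldots$ then produces the desired identity $\psi_r=\lambda_r F_V(E_r)^r g(E_r,r)+O(r^{-\infty})$, and the polynomial bound $\lambda_r=O(r^N)$ is read off by comparing supremum norms at a fixed point of $\Lambda_{E_r}$ using the admissibility of $\psi_r$.

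The main obstacle is the bookkeeping of the symbol calculus in the third step: checking that at every order the transport equation for $h_k$ differs from that for $g_V^k(E_r)$ only by the scalar $\lambda_r^{(k)}$, and that the residual freedom in the choice of the higher $g_V^k$ in Proposition \ref{lagsection} absorbs any remaining discrepancy. This is the technical heart of the Lagrangian-section theory for Toeplitz operators, and I would quote it directly from \cite{Cha03} (specifically the proof of Proposition 3.6 there) rather than redo the full calculation.
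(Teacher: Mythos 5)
Your proposal lands in the same place as the paper: both ultimately rest on Proposition 3.6 of \cite{Cha03}, and both identify the extra work in the present non-compact setting as the localization made possible by the Bergman-kernel estimate of Proposition \ref{bergman}. To that extent the route is the same.

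However, you misallocate the difficulty. You present step~2 --- any admissible section whose microsupport lies on the Lagrangian can be written, up to $O(r^{-\infty})$, as $F^r h$ with $h$ an asymptotic symbol --- as a near-automatic consequence of the WKB form of the Bergman kernel, and you then place the ``technical heart'' in the transport-equation bookkeeping of step~3. The emphasis should be reversed. Once one knows that $\psi_r$ is a Lagrangian section carrying the same flat phase factor $F_V(E_r)$ as the model, the hierarchy of transport equations on a contractible $U\times V$ pins down the symbol up to a scalar almost for free, since $\Lambda_{E_r}\cap(U\times V)$ is connected. The real content of Proposition 3.6 of \cite{Cha03} is precisely the claim you take for granted in step~2: Charles conjugates the system \eqref{quasimode} by a microlocal Fourier integral operator to a model manifold in which the space of microlocal solutions is explicitly one-dimensional, and this normal-form reduction, not the off-diagonal decay of $\Pi_r$ alone, is what forces the full asymptotic Lagrangian-section structure on $\psi_r$. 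The paper's proof says exactly this and then checks that the FIO argument, like the Ansatz construction of Proposition \ref{lagsection}, can be confined to a compact set using Proposition \ref{bergman} and hence survives passage to the open manifold $M$. So your proposal cites the right theorem and isolates the right localization issue, but your sketch of the mechanism in \cite{Cha03} is off: the normal-form reduction via Fourier integral operators, rather than the transport hierarchy, is where the uniqueness comes from.
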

\textbf{Proof of Propositions \ref{lagsection} and \ref{lagsection2}}: The material in \cite{Cha03} and \cite{Cha06} is sufficient to get these two propositions. The proof of Proposition \ref{lagsection} consists two steps: first step is a computation of how Toeplitz operators act on Lagrangian sections given by the Ansatz $\psi_r=F_V^r g(\cdot,\frac{1}{r})$. We follow the proof in \cite{Cha06} to work out this calculation.
First, the projection on $H_r$ acts on a Lagrangian section by sending $\psi_r$ to
$$\Pi_r \psi_r (x)= \int_{M} N(x,y) F^r (y) g(y) \mu_M (y)$$
where $N(x,y)$ is the Bergman kernel we computed in \ref{bergman}. What differs from \cite{Cha06} is that we integrate over a non-compact manifold $M$. However, as $[F|\leq 1$, $|g|=O(r^a)$ for some $a$, and $N(x,y) <C r^b e^{-r d(x,y)^2}$ for some constants $C$ and $a$, we can reduce this integral to an integral over a bounded open set: let $y$ be in a compact subset $K$ of $\mathring{P}\times T$, and let $\varepsilon < d(K,\partial P)$. We set $K_{\varepsilon}=\lbrace y \ /  \ d(y , K)<\varepsilon \rbrace$. We have:
$$\int_{M \setminus K_{\varepsilon}}N(x,y) F^r(y)g(y) \mu_M(y) =O(r^{-\infty})$$ 
where the $O(r^{-\infty})$ is uniform for $y \in K$. Once we reduced to an integral over a bounded set, the computations in \cite{Cha06} apply directly to show that $\Pi_r \psi_r =\psi_r +O(r^{-\infty})$ uniformly on $K$. In the same way, when we compute the action of the Toeplitz operator $T_r^{\gamma}$ on the Lagrangian section $F^r g$, we can restrict everything to an integral over a bounded set. We have that:
$$T_r^{\gamma} \psi_r (x) =\int_M N(x,y) f_{\gamma}(y) \chi(y) F^r(y) g(y) \mu(y)+O(r^{-\infty})$$
where the $O$ is uniform for $x \in K$, where $f_{\gamma}$ is the symbol of $T_r^{\gamma}$, and $\chi$ is some cutoff function that is identically $1$ over $K_{\varepsilon}$. Again, as $N(x,y) <C r^b e^{-r d(x,y)^2}$, the integral over $M \setminus K_{\varepsilon}$ is a $O(r^{-\infty})$ uniformly on $K$, and we have
$$T_r^{\gamma} \psi_r(x) =\int_{K_{\varepsilon}} N(x,y) f_{\gamma}(y) F^r (y) g(y) \mu_M(y)+O(r^{-\infty})$$
We then refer to \cite{Cha06} for the computation of the action of a Toeplitz operator on the Lagrangian section:
\\ it is shown that a Toeplitz operator of principal symbol $\mu_i$ and vanishing subprincipal symbol sends the Lagrangian section $F_V^r g$ to a Lagrangian section $$F_V^r (E_r g_0 +\frac{1}{r}(E_r g_1 +\frac{1}{i}\mathcal{L}_{X_i}g_0) +\ldots)+O(r^{-\infty})$$
These computations are again purely local, and transport directly in our setting. 
\\ Once the action of Toeplitz operators on Lagrangian sections is known, it is possible to recursively define the sections $g_i$ to get a quasimode by solving transport equations. Moreover the first term $g_0$ must satisfy $\mathcal{L}_{X_i}g_0=0$. 
\\ Then the proof of Proposition \ref{lagsection2} in \cite{Cha03} consists of using Fourier integral operators to show the microlocal equation is equivalent to an equation in a "model manifold" in which the equation can be explicitly solved. The same arguments using the control we have on the Bergman kernel to localize all integrations can be used to show that the proof in \cite{Cha03} can also be applied to our setting.$\square$
\\
\\ In general the quasimodes on such contractible open set can be patched together to get a quasimode on $U \times T$ if the sequence $E^r$ satisfy some conditions called the Bohr-Sommerfeld conditions, and then on $M$ using functions, as quasimodes are negligible away from $\Lambda_{E_r}$. Roughly speaking, the Bohr-Sommerfeld conditions consist in the following: as the sections $F_V(E_r)$ for different contractible $V \subset T$ differ by a complex number, we need to be able to renormalize them in a coherent way, this is possible when the holonomy of $L \times \delta$ along $\Lambda_{E^r}$ is trivial.
\\ In our case, we do not need to study the Bohr-Sommerfeld conditions: we already know the spectrum of $T_r^{D_f}$ and we have a sequence $\psi_{\beta_r}^r \in H_r$ that realize a quasimode on $U=\mathring{P} \times T$ of the Toeplitz operators $T_r^{D_f}$ for $E_i^r=-2 \cos(\frac{\pi \beta_{i,r}}{r})$.
\\ These quasimode have to concentrate on the Lagrangian $\Lambda_{E^r}^{D_f}$. If we chose $E^r$ in an appropriate open dense subset of $\mathbb{R}^F$, the intersection $\Lambda_{E^r}^{D_f}$ is connected according to Section \ref{sec:lag}. 
\\ Hence, if we cover $\Lambda_{E^r}^{D_f} \cap U$ by contractible open sets $V_1,\ldots ,V_k$, we know that on each $V_i$, there are coefficients $\lambda_{r,i}$ such that
$$\psi_{\beta_r}^r |_{V_i}=\lambda_{r,i} F_{V_i}^r(E^r) g_{V_i}(E^r,r)+O(r^{-\infty})$$ 
as sections in $\mathbb{L}^2(\mathring{P} \times V_i ,L^r \otimes \delta)$, and the coefficients $\lambda_{r,i}$ differ by complex numbers of norm $1$; the sections $F_{V_i}(E^r)$ can be patched together to give a section $F$ of $L$ that is flat of norm $1$ on $\Lambda_{E^r}$, similarly the $g_{V_i}(E^r ,r)$ are patched together to form a section $g$ of $\delta$. The sections $F$ and $g$ can be multivalued, however, $F^r g$ is a single-valued section of $L^r \times \delta$ which has trivial holonomy on $\Lambda_{E^r}$ as $E_r$ satisfy the Bohr-Sommerfeld conditions.
\\ Note that here we have used the fact that the intersection $U \cap \Lambda_{E^r}$ is connected, otherwise we would need multiple constants $\lambda_r$, one for each connected component of the intersection.
\\
\\
Now that we know an asymptotic expression of $\psi_{\beta_r}^r$ as a Lagrangian section, we want to calculate $|\lambda_r|$ using the fact that $\psi_{\beta_r}^r$ is of norm $1$.
\begin{proposition}The vectors $\psi_{\beta_r}$ for $\frac{\beta_r}{r} \in W$ where $W$ is a compact subset of $\mathring{Q}$, have an asymptotic expansion as elements of $H_r$:
\begin{displaymath} \psi_{\beta_r}=u_r \left(\frac{r}{2 \pi} \right)^{\frac{n}{4}}(1+O(r^{-1}))F^r(E^r)g(E^r) 
\end{displaymath}
where 
\begin{itemize}
\item[-]$u_r$ is a sequence of complex number of moduli $1$
\item[-]$E^r$ is the sequence of common eigenvalues corresponding to $\psi_{\beta_r}$, given by the formula $E^r_i=-2 \cos (\pi \frac{\beta_{r,i}}{r})$
\item[-]$F$ and $g(\cdot,r)$ are the smooth maps in $W \rightarrow \mathbb{L}^2 (M,L)$ and $W \rightarrow \mathbb{L}^2 (M,\delta)$ respectively, with $F(E^r)$ flat of norm $1$ on $\Lambda_{E^r}^{\mathcal{D}}$, holomorphic in a neighborhood of this Lagrangian, and of norm $<1$ elsewhere.
\\ And finally the sections $g(E^r)$ of $\delta$ on $M$ are holomorphic in a neighborhood of $\Lambda_{E^r}^{\mathcal{D}}$ and have an asymptotic expansion $g=g_0+\frac{1}{r}g_1+\ldots$ with $g_0$ a smooth section of the half-form bundle $\delta$ such that on $\Lambda_{E^r}^{\mathcal{D}}$ we have $g_0^{\otimes 2}=\frac{1}{\mathrm{Vol}(\Lambda_{E^r}^{\mathcal{D}})}d \theta_1 \wedge \ldots d \theta_n$, where the $\theta_i$ are angle coordinates on $\Lambda_{E^r}^{\mathcal{D}}$.
\end{itemize} 
\end{proposition}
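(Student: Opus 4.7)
The preceding discussion has already essentially constructed the decomposition: by covering $\Lambda_{E^r}^{\mathcal{D}}\cap U$ with contractible open sets $V_i$ and invoking Proposition~\ref{lagsection2} on each, one obtains local constants $\lambda_{r,i}$ and local Lagrangian sections $F_{V_i}^r g_{V_i}$ agreeing up to phase, which (using the connectedness from Section~\ref{sec:lag} together with the Bohr--Sommerfeld condition automatically satisfied because $\psi_{\beta_r}^r$ exists as a genuine joint eigenvector) patch to a single globally defined Lagrangian section $F^r(E^r) g(E^r,r)$ on the whole of $M$, yielding an identity $\psi_{\beta_r}^r = \lambda_r F^r(E^r) g(E^r,r) + O(r^{-\infty})$ for a single scalar $\lambda_r = O(r^N)$. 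The remaining task is threefold: normalize $g_0$ as in the statement, extract $|\lambda_r|$ from $\|\psi_{\beta_r}^r\|=1$, and absorb the residual phase into $u_r$.

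The plan for the norm computation is to evaluate $\|F^r g\|_{\mathbb{L}^2(M,L^r\otimes\delta)}^2$ by a Laplace-type argument. Since $|F|\equiv 1$ on $\Lambda_{E^r}^{\mathcal{D}}$ and $|F|<1$ strictly elsewhere, the integrand $|F|^{2r}\, h(g,g)\,\omega^n/n!$ concentrates in a shrinking tubular neighborhood of $\Lambda_{E^r}^{\mathcal{D}}$ of width $O(r^{-1/2})$; the same Gaussian tail control that was used to truncate integrals to compact subsets in the proof of Propositions~\ref{lagsection} and~\ref{lagsection2} justifies discarding the contribution from outside a fixed compact neighborhood of $\Lambda_{E^r}^{\mathcal{D}}$ up to $O(r^{-\infty})$. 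In action-angle coordinates $(x,\theta)$ near the Lagrangian, where $\Lambda_{E^r}^{\mathcal{D}}=\{x=x(E^r)\}$ and $\omega=\sum dx_i\wedge d\theta_i$, the transverse Hessian of $-\log|F|^2$ is exactly the Kähler metric restricted to the normal bundle of $\Lambda_{E^r}^{\mathcal{D}}$; this is a standard consequence of the prequantization identity, curvature $= \omega/i$, combined with the fact that $F$ is holomorphic near the Lagrangian and has $|F|=1$ there.

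Carrying out the Gaussian integration in the transverse $x$-variables at fixed $\theta\in T$ produces the universal factor $(2\pi/r)^{n/2}$ together with a Jacobian that, because of the relation between the Kähler volume and the action-angle volume $\prod dx_i \, d\theta_i$, is identically one. Thus
\begin{equation*}
\|F^r g\|^2 = \Bigl(\tfrac{2\pi}{r}\Bigr)^{n/2} \int_{\Lambda_{E^r}^{\mathcal{D}}} h(g_0,g_0)\, d\theta_1\cdots d\theta_n + O\bigl(r^{-n/2-1}\bigr),
\end{equation*}
where $h(g_0,g_0)$ is the pointwise pairing on $\delta$, which along a Lagrangian coincides up to the universal constant $i^{n(2-n)}$ of Section~\ref{sec:halfform} with the density $|g_0^{\otimes 2}|$. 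With the choice $g_0^{\otimes 2}|_{\Lambda_{E^r}^{\mathcal{D}}} = \frac{1}{\mathrm{Vol}(\Lambda_{E^r}^{\mathcal{D}})} d\theta_1\wedge\cdots\wedge d\theta_n$ (which the transport equations $\mathcal{L}_{X_i}g_0=0$ permit, since they only determine $g_0$ up to multiplication by a constant), the integral becomes exactly $1$, hence $|\lambda_r|^2 = (r/2\pi)^{n/2} + O(r^{-1})$, i.e.\ $|\lambda_r| = (r/2\pi)^{n/4}(1+O(r^{-1}))$. Setting $u_r = \lambda_r/|\lambda_r|$ yields the stated expansion.

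The main technical obstacle is the bookkeeping in the transverse Laplace step: one must verify that the Hessian determinant, the Jacobian relating $\omega^n/n!$ to $dx\,d\theta$, and the universal constant $i^{n(2-n)}$ appearing in the half-form pairing of Section~\ref{sec:halfform} conspire to leave no residual factor other than $(2\pi/r)^{n/2}$. This identity is essentially the content of the normalization calculations in \cite{Cha03,Cha06} for compact Kähler manifolds; the only novelty in our situation is the non-compactness of $M=\mathbb{R}^E\times T$, which is handled by the same $O(r^{-\infty})$-truncation argument already used in the proofs of Propositions~\ref{lagsection} and~\ref{lagsection2}, relying on the Gaussian decay of the Bergman kernel established in Proposition~\ref{bergman}.
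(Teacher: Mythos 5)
The first part of your plan — covering $\Lambda_{E^r}^{\mathcal{D}}\cap U$ by contractible sets, invoking Proposition~\ref{lagsection2}, and patching via connectedness and Bohr--Sommerfeld — is exactly what the paper's preceding discussion does, and the normalization $g_0^{\otimes 2} = \frac{1}{\mathrm{Vol}(\Lambda_{E^r}^{\mathcal{D}})}\,d\theta_1\wedge\cdots\wedge d\theta_n$ and the abstract Gaussian computation of $\|F^r g\|^2$ are also fine. But the step where you \emph{extract} $|\lambda_r|$ by writing $1 = \|\psi_{\beta_r}^r\|^2_{\mathbb{L}^2(M)} = |\lambda_r|^2\|F^r g\|^2 + O(r^{-\infty})$ has a genuine gap, and it is precisely the one the paper flags. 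The microlocal identity $\psi_{\beta_r}^r = \lambda_r F^r(E^r)g(E^r,r) + O(r^{-\infty})$ is only known uniformly on compact subsets of $U = \mathring{P}\times T$; to pass to an $\mathbb{L}^2(M)$ statement you must rule out $\psi_{\beta_r}^r$ carrying non-negligible mass near $\partial P\times T$. This is not a technicality: by Proposition~\ref{laginter}, for generic $E^r$ the Lagrangian torus $\Lambda_{E^r}^{\mathcal{D}}$ meets $\mu^{-1}(\partial P)$ transversally, so part of the set on which the quasimode concentrates lies at or near the boundary, where neither the Toeplitz calculus of Theorem~\ref{toeplitz} nor the Lagrangian-section description is available. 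Your appeal to the ``Gaussian tail control'' from Propositions~\ref{lagsection} and~\ref{lagsection2} controls integrals against the Bergman kernel; it says nothing about the a priori size of the coefficients of $\psi_{\beta_r}^r$ on the basis vectors $e_\alpha$ with $\alpha/r$ close to $\partial P$.

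The paper's own proof takes a genuinely different route designed to avoid exactly this. It introduces the localizing Toeplitz operator $\Pi_r$ acting diagonally in the $\mathcal{C}$-basis by $\Pi_r\varphi_\alpha = \rho(\alpha/r)\varphi_\alpha$, with $\rho$ a cutoff supported in $\mathring{P}$, and computes $\langle \Pi_r\psi_{\beta_r},\psi_{\beta_r}\rangle$ twice. In the $\mathcal{C}$-picture this quantity equals $(1+O(r^{-1}))|\lambda_r|^2\,\mathrm{Vol}(\Lambda_{E^r})^{-1}\int_{\Lambda_{E^r}}\sigma\,d\theta$, with $\lambda_r$ unknown; in the $\mathcal{D}$-picture $\psi_{\beta_r}$ is the explicit normalized monomial $e_{\beta_r}$, the Lagrangian $\{\beta_r/r\}\times T'$ lies entirely in $\mathring{Q}\times T'$, and the prefactor is exactly $(r/2\pi)^{n/2}$. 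Since $\Pi_r$ has the same principal symbol (a trace function) and vanishing subprincipal symbol in both pictures, comparing the two expansions delivers $|\lambda_r|$. The cutoff removes the boundary region from both computations, so the uncontrolled behavior of $\psi_{\beta_r}^r$ near $\partial P\times T$ never enters. To complete your direct argument you would have to supply a separate proof that $\psi_{\beta_r}^r$ has $O(r^{-\infty})$ mass in any shrinking neighborhood of $\partial P\times T$, which is precisely what the paper's indirect scheme is constructed to circumvent.
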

\begin{proof}The norm of Lagrangian sections are easily computed using stationary phase lemma: according to \cite{Cha03}, the Lagrangian section is normalized when we normalize the section $g_V$ by $g_V^0(E)^{\otimes 2}=\frac{1}{\mathrm{Vol}(\Lambda_{E^r})}d\theta_1 \wedge \ldots \wedge d \theta_n$ where $\theta_i$ are angle coordinates on $\Lambda_{E^r}$, and $\lambda_r=\left(\frac{r}{2\pi}\right)^{\frac{n}{4}}$.
\\ But as the difference between $\psi_{\beta_r}^r$ and the Lagrangian section is $O(r^{-\infty})$ uniformly only on compact subset of $U=\mathring{P}\times T$, we need to be careful that $\psi_{\beta_r}$ does not carry too much weight over small neighborhoods of $\partial P \times T$. 
\\ To compute the coefficient $\lambda_r$, we will introduce an operator $\Pi$ to localize our eigenvector on $\mathring{P}\times T$. Let $\rho$ be a cutoff function with compact support inside $\mathring{P}$ and identically equal to $1$ on the open set $\lbrace x \in P \, / \, d(x,\partial P)>\varepsilon \rbrace$. For $\varepsilon$ sufficiently small, this set has non trivial intersection with $\Lambda_{-2 \cos(\pi x)}^{\mathcal{D}}$ for any $x$ in a given compact subset $W$ of $\mathring{Q}$.
\\ Now the operator $\Pi_r \in \textrm{End}(V_r(\Sigma))$ acts on the basis of $\varphi_{\alpha}$ associated to the pants decomposition $\mathcal{C}$ by: 
\begin{displaymath}\Pi_r \varphi_{\alpha}=\rho (\frac{\alpha}{r})\varphi{\alpha}
\end{displaymath}
The operator $\Pi_r$ can actually be seen as a function of the curve operators $T_r^{C_e}$: we have $\Pi_r=\rho (\frac{1}{\pi}\textrm{acos}(-\frac{T_r^{C_e}}{2}))$.
 Therefore, it can be viewed as a Toeplitz operator of principal symbol
  $\sigma = \rho (\frac{1}{\pi}\textrm{acos}(-\frac{f_{C_e}}{2}))$
   and vanishing subprincipal symbol, where $f_{C_e}$ are the trace functions associated to the curves $C_e$. In \cite{Cha03}, it is stated that for a Lagrangian section 
 $\Psi_r=\lambda_r (\frac{r}{2\pi})^{\frac{n}{4}}F^r g$ 
 concentrating on $\Lambda_{E^r}$, and for a Toeplitz operator $T_r$ of principal symbol $\sigma$ and vanishing subprincipal symbol, we have 
\begin{displaymath}
\langle T_r \Psi_r , \Psi_r \rangle = (1+O(r^{-1}))|\lambda_r|^2 \frac{1}{\mathrm{Vol}(\Lambda_{E^r})} \int_{\Lambda_{E^r}} \sigma d\theta_1 \wedge \ldots d \theta_n 
\end{displaymath}
We can apply this for the vector $\psi_{\beta_r}$ as an element of $H_r$ and the Toeplitz operator $\Pi_r$ to get an expression of $\langle \Pi_r \psi_{\beta_r} , \psi_{\beta_r} \rangle$.
\\ But instead of using the isomorphism from $V_r(\Sigma)$ to $H_r$ corresponding to the pants decomposition $\mathcal{C}$, we also have an isomorphism corresponding to the decomposition $\mathcal{D}$. With this isomorphism, $\psi_{\beta_r}$ is sent to a monomial $e_{\beta_r}$ in $M'= \mathbb{R}^F \times T ' $, which is a Lagrangian section concentrating on $\lbrace \frac{\beta_r}{r}\rbrace\times T'$, furthermore, in this simple situation, the coefficient $\lambda_r$ is exactly $(\frac{r}{2 \pi})^{\frac{n}{4}}$. Though the operator $\Pi_r$ does not have a simple expression as a diagonal operator in the base of the $e_{\beta}$, it is still a Toeplitz operator of principal symbol $\sigma =\rho (\frac{1}{\pi}\textrm{acos}(-\frac{f_{C_e}}{2}))$ and vanishing subprincipal symbol in this new setting. Hence we have:
 \begin{displaymath}
\langle \Pi_r \psi_{\beta_r} , \psi_{\beta_r} \rangle = (1+O(r^{-1}))(\frac{r}{2 \pi})^{\frac{n}{2}} \frac{1}{\mathrm{Vol}(T')} \int_{\lbrace \frac{\beta_r}{r} \rbrace \times T'} \sigma d\theta_1 \wedge \ldots d \theta_n 
\end{displaymath}
\\ Comparing the two asymptotic expansions of $\langle \Pi_r \psi_{\beta_r} , \psi_{\beta_r} \rangle$, as the integral of the principal symbol of $\Pi_r$ on $\Lambda_{E^r}$ is non-vanishing, we get that the coefficient of normalization is indeed that of the proposition.
\end{proof}
\subsection{A formula for pairings of eigenvectors}
\label{sec:formula}
We are ready to prove our final theorem:
\begin{theo}\label{pairing} Let $\mathcal{C}$ and $\mathcal{D}$ be two pair of pants decompositions of a closed oriented surface $\Sigma$. Let $\alpha_r$ and $\beta_r$ r-admissible colorings for the two pants decompositions such that $(\alpha_r , \beta_r)$ satisfies Property (*), and let $\varphi_{\alpha_r}$ and $\psi_{\beta_r}$ be the corresponding basis vectors of $V_r(\Sigma)$. Then we have the following asymptotic expansion:
 \begin{displaymath}\langle \varphi_{\alpha_r} ,\psi_{\beta_r} \rangle = u_r\left(\frac{r}{2\pi}\right)^{-\frac{n}{2}}\frac{1}{\sqrt{\mathrm{Vol}(\Lambda^{\mathcal{C}}_{E_{\alpha_r}})\mathrm{Vol}(\Lambda^{\mathcal{D}}_{E_{\beta_r}'})}}\underset{z \in \Lambda^{\mathcal{C}}_{E_{\alpha_r}} \cap \Lambda^{\mathcal{D}}_{E_{\beta_r}'}}{\sum}\frac{e^{i r \eta (z)}i^{m(z)}}{ |\det(\lbrace \mu_i , \mu_j ' \rbrace)|^{\frac{1}{2}}}+O(r^{-\frac{n}{2}-1})
 \end{displaymath}
 where $n=3 g-3$ is half the dimension of the moduli space, $u_r$ is a sequence of complex numbers of moduli $1$, $\mu_i=-\tr (\rho (C_i))$ (resp. $\mu_j '=-\tr (\rho (D_j))$ ) are the principal symbols of the curve operators $T_r^{C_i}$ (resp. $T_r^{D_j}$), the volumes of the Lagrangians are volumes for $n$-forms dual to the $n$-vectors $X_1 \wedge \ldots \wedge X_n$ (resp. $X_1 ' \wedge \ldots \wedge X_n '$) of Hamiltonian vector fields of $\mu_i$ (resp. $\mu_j '$ ), $e^{i \eta(z)} \in \mathbb{U}$ is the holonomy of $L$ along a loop $\gamma_{z_0 , z}$ which goes from a reference point  $z_0 \in \Lambda^{\mathcal{C}}_{E_{\alpha_r}} \cap \Lambda^{\mathcal{D}}_{E_{\beta_r}'}$ to $z$ in $\Lambda^{\mathcal{C}}_{E_{\alpha_r}} $ then back to $z_0$ in $\Lambda^{\mathcal{D}}_{E_{\beta_r}'}$ and finally $m(z) \in \mathbb{Z}$ corresponds to a Maslov index.
 \end{theo}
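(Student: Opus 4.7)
The plan is to work in the geometric quantization model of Section \ref{sec:holosec}, in which both $\varphi_{\alpha_r}$ and $\psi_{\beta_r}$ are holomorphic sections of $L^r\otimes\delta$ over $M=\mathbb{R}^E\times T$, to write the pairing as an oscillatory integral, and to evaluate it by stationary phase at each point of the transverse finite intersection $\Lambda^{\mathcal{C}}_{E_{\alpha_r}}\cap\Lambda^{\mathcal{D}}_{E_{\beta_r}'}$ granted by Proposition \ref{laginter} and Property (*).

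The first step is to present both vectors as normalized Lagrangian sections. The vector $\varphi_{\alpha_r}$ is the monomial $e_{\alpha_r}$ of Proposition \ref{isom}; rewriting $|e_{\alpha_r}|^2h(1)^r=\kappa^{-1}e^{-r\|t-\alpha_r/r\|^2/2}$ exhibits $e_{\alpha_r}$ as a Lagrangian section with underlying Lagrangian $\Gamma_{\alpha_r}=\{\alpha_r/r\}\times T$, which corresponds under the symplectomorphism of Section \ref{sec:moduli} to $\Lambda^{\mathcal{C}}_{E_{\alpha_r}}$. The vector $\psi_{\beta_r}$ is given by the previous proposition as $u_r(r/2\pi)^{n/4}(1+O(r^{-1}))\,F(E^r)^r g(E^r)$, a Lagrangian section concentrating on $\Lambda^{\mathcal{D}}_{E_{\beta_r}'}$. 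The pairing can therefore be written
\begin{equation*}
\langle \varphi_{\alpha_r},\psi_{\beta_r}\rangle=\overline{u_r}\left(\frac{r}{2\pi}\right)^{n/4}(1+O(r^{-1}))\int_M e_{\alpha_r}\,\overline{F(E^r)^r g(E^r)}\,e^{-r\varphi}\,\mu_M,
\end{equation*}
where $\mu_M=\omega^n/n!$ is the Liouville measure.

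Next I would evaluate this integral by stationary phase, which is exactly the standard computation of a pairing of two Lagrangian sections developed by Charles in \cite{Cha03}. The integrand is bounded by a product of two Gaussians centered on the two Lagrangians, so the non-compactness of $M$ is harmless: the same Poisson-summation argument as in Proposition \ref{bergman} reduces the problem to a compact neighborhood of $\Lambda^{\mathcal{C}}_{E_{\alpha_r}}\cap\Lambda^{\mathcal{D}}_{E_{\beta_r}'}$, and points away from the intersection contribute $O(r^{-\infty})$. Near an intersection point $z$, in local Darboux coordinates adapted to the splitting $T_zM=T_z\Lambda^{\mathcal{C}}_{E_{\alpha_r}}\oplus T_z\Lambda^{\mathcal{D}}_{E_{\beta_r}'}$, the Hessian of the total phase along the transverse directions is the matrix of Poisson brackets $\{\mu_i,\mu_j'\}(z)$, because the Hamiltonian vector fields $X_i$ and $X_j'$ span the two tangent spaces and the symplectic form pairs them precisely through these brackets. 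The Gaussian integral thus contributes $(r/2\pi)^{-n}|\det\{\mu_i,\mu_j'\}(z)|^{-1/2}$, which combined with the two normalizations $(r/2\pi)^{n/4}$ yields the global $(r/2\pi)^{-n/2}$. The phase at $z$ is the holonomy $e^{ir\eta(z_0,z)}$ of $F^r$ along the loop $\gamma_{z_0,z}$, while the half-form pairing of $g(E^r)$, normalized so that $g_0^{\otimes 2}|_{\Lambda}=d\theta_1\wedge\cdots\wedge d\theta_n/\mathrm{Vol}(\Lambda)$, produces the volume factor $(V_xV_y')^{-1/2}$ together with the Maslov factor $i^{m(z_0,z)}$ coming from the square root of the half-form pairing $(\cdot,\cdot)_{\Gamma_1,\Gamma_2}$ of Section \ref{sec:halfform}.

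The main obstacle is the consistent determination of the index $m(z_0,z)$: each local contribution involves a square root of the half-form pairing that is only defined up to a sign, and one must track its continuous variation as the pair of transverse Lagrangians $(T\Lambda^{\mathcal{C}}_{E_{\alpha_r}},T\Lambda^{\mathcal{D}}_{E_{\beta_r}'})$ is deformed along $\gamma_{z_0,z}$ in the oriented Lagrangian Grassmannian. This is handled by the deformation principle for the pairing of Section \ref{sec:halfform}, together with the fact that the quantity $r\eta(z_0,z)+\frac{\pi}{2}m(z_0,z)$ is well defined modulo $2\pi\mathbb{Z}$ on account of the Bohr--Sommerfeld conditions, which are automatically satisfied here because the eigenvalues $E^r$ correspond to genuine joint eigenvectors of the commuting operators $T_r^{D_f}$. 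Summing the contributions over $z\in\Lambda^{\mathcal{C}}_{E_{\alpha_r}}\cap\Lambda^{\mathcal{D}}_{E_{\beta_r}'}$ and absorbing the remaining global unimodular factor into the sequence $u_r$ produces the asymptotic expansion claimed in the theorem.
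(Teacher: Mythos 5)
Your proposal is correct and follows essentially the same approach as the paper: both identify $\varphi_{\alpha_r}$ and $\psi_{\beta_r}$ as Lagrangian sections concentrating on $\Lambda^{\mathcal{C}}_{E_{\alpha_r}}$ and $\Lambda^{\mathcal{D}}_{E'_{\beta_r}}$, reduce the pairing to a sum of local stationary-phase contributions at the transverse intersection points, and read off the holonomy factor $e^{ir\eta(z)}$, the half-form pairing producing $|\det\{\mu_i,\mu_j'\}|^{-1/2}$ and the volume factor, the index $i^{m(z)}$ from the square-root ambiguity, and the Bohr--Sommerfeld invariance, all by appealing to Charles \cite{Cha03} and to Section \ref{sec:halfform}. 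Your argument is a bit more explicit than the paper's in identifying the transverse Hessian with the Poisson-bracket matrix and in flagging the Poisson-summation/Gaussian-decay treatment of the non-compactness of $M$, but these are amplifications of the same route rather than a different one.
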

\begin{proof} Let $\mathcal{C}$ and $\mathcal{D}$ be two pants decompositions of $\Sigma$, and $(\alpha_r ,\beta_r)$ be a sequence of admissible $r$-colorings (that is index of basis vectors) satisfying Property (*) of \ref{sec:quasimodes}.
\\ We consider pairings of the vectors $\varphi_{\alpha_r}$ and $\psi_{\beta_r}$. The first is a common eigenvector of the $T_r^{C_e}$ with eigenvalues $E^r_e=-2 \cos (\frac{\pi \alpha_{r,e}}{r})$.
 The second is a common eigenvector of the $T_r^{D_f}$
  with common eigenvalues $E'^r_f =-2 \cos (\frac{\pi \beta_{r,f}}{r})$.
\\ We use the first pants decomposition as a decomposition of reference, giving us an isomorphism $\Phi_r$ between $V_r(\Sigma)$ and a space $H_r$ of holomorphic sections of a complex line bundle $L^r \otimes \delta$, by the work done in Section \ref{isom}. Under this isomorphism, we know that the images of the vectors $\psi_{\beta_r}$ are Lagrangian sections, concentrating on the Lagrangian $\Lambda^{\mathcal{D}}_{E'^r}$, and of the form $(1+O(r^{-1})(\frac{r}{2\pi})^{\frac{n}{4}}F^r g$, where $F$ section of $L$ and $g$ section of $\delta$ satisfying the conditions explained in Section \ref{sec:quasimodes}. The same is true for the $\varphi_{\alpha_r}$. (actually, the situation is even simpler, as the isomorphism $\Phi_r$ sends the vectors $\varphi_{\alpha_r}$ to the vectors $e_{\alpha_r}$ of $H_r$, which are exactly the expected Lagrangian sections).  
\\ As these sections concentrate respectively on $\Lambda_{E^r}^{\mathcal{C}}$ and $\Lambda_{E'^r}^{\mathcal{D}}$, the only meaningful contribution in the integral comes from the intersection points of these two Lagrangians. But as $\alpha_r$ and $\beta_r$ were carefully chosen to respect Property (*), the intersection of these two Lagrangians is always transversal, in particular, consists of a finite set of points. The contribution of each intersection point can be computed by means of stationary phase methods, the computations are done in \cite{Cha03}.
\\ For two Lagrangian sections $(\frac{r}{2\pi})^{\frac{n}{4}}F_1 g_1$ and $(\frac{r}{2\pi})^{\frac{n}{4}}F_2 g_2$ concentrating on $\Lambda_1$ and $\Lambda_2$, the first order of the contribution of an intersection point $z$ of their Lagrangian supports is $(\frac{r}{2\pi})^{-\frac{n}{2}}F_1 (z)^r \overline{F_2} (z)^r(g_1(z),g_2(z))_{T_z \Lambda_1 , T_z \Lambda_2}$.
\\ But $F_1$ and $F_2$ are flat of norm $1$ on $\Lambda_1$ and $\Lambda_2$. Thus, if we write $(F_1\overline{F_2})^r (z)=e^{i r \eta (z)}$ and pick a point $z_0 \in \Lambda_1 \cap \Lambda_2$ of reference, then $e^{i \eta (z)-\eta(z_0)}$ is the holonomy of the line bundle $L$ along a loop $\gamma_{z_0,z}$ which goes from $z_0$ to $z$ in $\Lambda_1$ and returns from $z$ to $z_0$ in $\Lambda_2$.
\\ Furthermore, up to normalization, $g_1^{\otimes 2}(z)$ and $g_2^{\otimes 2}(z)$ are $n$-forms dual to the $n$-vectors $X_1 \wedge \ldots \wedge X_n (z)$ and $X_1 ' \wedge \ldots \wedge X_n '(z)$ (where the vector fields $X_i$ and $X_i'$ are Hamiltonian vector fields of $\mu_i$ and $\mu_i '$). Thus the pairing $(g_1 (z),g_2(z))_{T_z \Lambda_1 , T_z \Lambda_2}$ is a square root of $\frac{1}{\mathrm{Vol}(\Lambda_1)\mathrm{Vol}(\Lambda_2)}\textrm{det}(\lbrace \mu_i , \mu_j ' \rbrace)^{-1}(z)$.
\\ We can introduce integers $m(z)$ such that
$$(g_1(z) , g_2 (z) )_{T_z \Lambda_1 , T_z \Lambda_2}=\frac{1}{\sqrt{\mathrm{Vol}(\Lambda_1)\mathrm{Vol}(\Lambda_2)}}|\textrm{det}(\lbrace \mu_i , \mu_j ' \rbrace)|^{-\frac{1}{2}}(z) i^{m(z)}$$
\\ Recall that $g_1$ and $g_2$ are sections of $\delta$ such that $g_1^{\otimes 2}$ and $g_2^{\otimes 2}$ are the
 complexification of the $n$-forms on $\Lambda_1$ and $\Lambda_2$ given by $\frac{1}{\mathrm{Vol}(\Lambda_1)}\beta_1$ and 
 $\frac{1}{\mathrm{Vol}(\Lambda_2)}\beta_2$ (where $\beta_1$ and $\beta_2$ are dual to the Hamiltonian vector fields of the two 
 sets of principal symbols). The pairings of these sections have been described in \ref{sec:halfform}, which gives a rule 
 depending on the relative positions of the Lagrangian $\Lambda_1$ and $\Lambda_2$ to choose the square root. 
\\ These definitions of $\eta(z)$ and $m(z)$ depend only on the homotopy class $\gamma_{z_0 ,z}$ as $L$ and $\delta$ are flat on $\Lambda_1$ and $\Lambda_2$. Furthermore, $L^r \otimes \delta$ is flat and trivial on $\Lambda_1$ and $\Lambda_2$ as Bohr-Sommerfeld conditions are verified, thus the asymptotic expansion does not depend on the choice of $\gamma_{z_0 ,z}$ at all. 
\end{proof}
\subsection{A geometric interpretation of the phase and index}
\label{geominter}
 Our theorem \ref{pairing} introduces two quantities: a phase $\eta (z)$ and an integer index $m(z)$ where $z$ is in the intersection of the two Lagrangian $\Lambda_1$ and $\Lambda_2$ of the theorem. They are defined using features of Kähler geometry: holonomy of a prequantizing bundle, parallel transport in a half-form bundle, and the pairings in half-form bundle of Section \ref{sec:halfform}. From this description, the procedure to compute the index $m(z)$ seems rather intricate. 
 \\ We would like a simple geometric picture to interpret both the phase $\eta(z)$ and the index $m(z)$. Of course, only their variation are relevant: if we choose a reference point $z_0$ in the intersection $\Lambda_1 \cap \Lambda_2$, we can assume $\eta(z_0)=0$ and $m(z_0)=0$ just by changing the moduli $1$ complex number $u_r$ appearing in Theorem \ref{pairing}. The geometric picture we have in mind should preferably involve only the symplectic geometry and not the complex structure $J$ on our quantizing space $M$, as it is the only structure inherited from the moduli space $\mathcal{M}(\Sigma)$. 
 \\ 
 \\ An interesting case is when a loop $\gamma_{z_0 ,z}$ is trivial in $\pi_1 M$ and thus bounds a disk $D_{z_0 ,z}$. As the 
prequantizing line bundle $L$ has curvature $\frac{\omega}{i}$, the holonomy of $L$ along $\gamma_{z_0 ,z }$ is the same as
 $e^{i A(D_{z_0,z})}$, where $A(D_{z_0 ,z})$ is the symplectic area of $D_{z_0 , z}$. 
\\ As for the index $m(z)$, observe that as $D(z_0 ,z)$ is contractile, the half-form bundle $\delta$ is trivial on it. After choosing $g_1(z_0)$ and $g_2 (z_0)$ (for which there is a sign ambiguity), the value of $g_1(x)$ and $g_2(x)$ is determined for any $x \in D(z_0 ,z)$ by parallel transport. View $g_1(z_0)^{\otimes 2}$ as the complexification of a $n$-form on $\Lambda_1$, then following $\gamma_{z_0 ,z}$ we get a path $e(x)$ in the oriented Lagrangian Grassmanian $LG^+ (D(z_0,z))$ of $D(z_0,z)$, such that for $x \in \gamma_{z_0 ,z}$, the element $g_1(x)^{\otimes 2}$ is the complexification of a positive $n$-form on $e(x)$. The same can be done for the return map from $z$ to $z_0$ in $\Lambda_2$, we get a path $f$. There is a canonical way to connect these two paths to get a loop in the Lagrangian Grassmanian. Indeed, fix a Lagrangian frame  $L$. The set of Lagrangians $L'$ transverse to $L$ is affine: any such Lagrangian is the graph of a map $A: L \rightarrow J L$ such that $JA$ is symmetric, thus defines a quadratic form on $L$. We can thus connect $L'$ to $J L$ by a segment. This give us paths $p_{z_0}$ and $p_z$ from $T_{z_0} \Lambda_2$ to $J T_{z_0} \Lambda_1$ and from $J T_z \Lambda_1$ to $T_z \Lambda_2$. The path $J e$ allows us to close the path $p_z f p_{z_0}$. We get a close path in the oriented Lagrangian Grassmanian, the $\pi_1$ class of which is exactly $m(z)$. 
\\ Indeed, as our index $m(z)$ and the class we defined depend only on the Lagrangian $\Lambda_1$ and $\Lambda_2$, we can move our Lagrangian so that at points of intersection $z_0$ and $z$ we have $T \Lambda_2= J T \Lambda_1$. Then the pairing $(\cdot ,\cdot )_{T \Lambda_1 , T \Lambda_2 }$ in the half-form bundle is positively proportional to the square root of the Hermitian pairing on $n$-form on $D_{z_0 ,z}$. Thus, the ambiguity in the square root comes from which square roots of $d \theta_1 \wedge \ldots \wedge d \theta_n$ (resp $d \theta_1 ' \wedge \ldots \wedge d \theta_n '$) the section $g_1$ (resp. $g_2$) represents at $z_0$ and $z$. It is easy to see that following a loop of class $1$ in $\pi_1 LG^+ (D_{z_0 ,z})$, parallel transport changes $g_i$ by a $-$ sign. Hence the $\pi_1$-class of $(J e) \cdot p_z \cdot f \cdot p_{z_0}$ calculates the index $m(z)$
\\ The definition of the index seems to depend on the quasi-complex structure $J$. However, the set of quasi-complex structures on the disk $D_{z_0 , z}$ is affine. As the index we defined depend continuously on $J$, it must be constant when the quasi-complex structure $J$ varies.  
 \\
 \\ The argument to interpret geometrically the index $m(z)$ works only when the loop $\gamma_{z_0 ,z}$ bounds a disk in $\mathring{P}\times T$. The situation is more complicated when the loop is not trivial in $\mathring{P} \times T$ (whose fundamental group is the same as $T$, that is $\mathbb{Z}^n$), and the interpretation of the index is not clear in this picture, and seems to depend on our specific choice of half-form bundle.
  \\ A possible way of tackling this problem would be to show that our choice of half-form bundle derives from the choice of a spin-structure on $\mathcal{M}(\Sigma)$. The loop $\gamma_{z_0 ,z}$ can be defined as a loop in $\mathcal{M}(\Sigma)$. As the fundamental group of the moduli space $\mathcal{M}(\Sigma)$ is trivial (as explained in \cite{RSW}), this loop always bounds a disk in $\mathcal{M}(\Sigma)$. We expect the index $m(z)$ to be computable as the class of some specific loop in the Lagrangian Grassmanian of $\mathcal{M}(\Sigma)$. We leave such considerations to a following paper.

\end{document}